\documentclass[letterpaper,11pt]{amsart}
\usepackage[margin=1.2in]{geometry}
\usepackage{amsmath,amsthm,amssymb}
\usepackage{xspace,xcolor}
\usepackage[breaklinks,colorlinks,citecolor=teal,linkcolor=teal,urlcolor=teal,pagebackref,hyperindex]{hyperref}
\usepackage[alphabetic]{amsrefs}
\usepackage[all]{xy}
\usepackage[english]{babel}
\usepackage{enumitem}
\usepackage{tikz,xcolor}
\usepackage{tikz-cd}
\usepackage{mathrsfs}
\usepackage{soul}
\setstcolor{red}
\usepackage{color}

\DeclareMathOperator{\DDB}{\underline{\Omega}}

\DeclareMathOperator{\IO}{I\hspace{0.07em}\underline{\Omega}}
\DeclareMathOperator{\IC}{IC}
\newcommand{\kdot}{{{\,\begin{picture}(1,1)(-1,-2)\circle*{2}\end{picture}\,}}}

\theoremstyle{plain}
\newtheorem{thm}{Theorem}[section]

\newtheorem{lem}[thm]{Lemma}
\newtheorem{prop}[thm]{Proposition}

\newtheorem{conj}[thm]{Conjecture}

\theoremstyle{definition}
\newtheorem{defi}[thm]{Definition}

\newtheorem{setting}[thm]{Setting}

\newtheorem{egs}[thm]{Examples}

\theoremstyle{remark}
\newtheorem{rmk}[thm]{Remark}

\def\Z{{\mathbf Z}}
\def\Q{{\mathbf Q}}

\def\C{{\mathbf C}}

\def\A{{\mathbf A}}

\def\cD{\mathcal{D}}

\def\cF{\mathcal{F}}
\def\cG{\mathcal{G}}
\def\cH{\mathcal{H}}

\def\cJ{\mathcal{J}}

\def\cL{\mathcal{L}}

\def\cO{\mathcal{O}}

\def\cT{\mathcal{T}}

\def\J{\mathcal{J}}

\def\frm{\mathfrak{m}}

\def\m{\mathfrak{m}} 

\def\.{\cdot}
\def\^{\widehat}

\def\({\left(}
\def\){\right)}

\renewcommand{\and}{ \ \ \text{ and } \ \ }

\DeclareMathOperator{\Spec} {Spec}

\DeclareMathOperator{\Hom} {Hom}

\DeclareMathOperator{\Exc} {Exc}

\begin{document}

\author[B.~Dirks]{Bradley Dirks}

\address{Department of Mathematics, Stony Brook University, Stony Brook, NY 11794-3651, USA}

\email{bradley.dirks@stonybrook.edu}

\author[J.~Witaszek]{Jakub Witaszek}

\address{Northwestern University, Department of Mathematics, Lunt Hall, 2033 Sheridan Road, Evanston IL, 60208, USA}

\email{jakub.witaszek@northwestern.edu}

\begin{abstract} 
We give Hodge-theoretic characterizations of characteristic-zero varieties of open $F$-nilpotent and open weakly $F$-nilpotent type, extending a result of Srinivas and Takagi beyond isolated singularities. We prove one direction unconditionally and the converse assuming the weak ordinarity conjecture.
\end{abstract}

\title{$F$-nilpotence and Hodge filtrations beyond isolated singularities}
\maketitle

\tableofcontents

\section{Introduction}

Over the complex numbers, some important classes of singularities, such as \emph{rational} and \emph{Du Bois}, are defined via birational geometry or Hodge-theoretic conditions. In positive characteristic, singularities are often studied through the action of Frobenius, giving rise to notions such as \emph{$F$-rational}, \emph{$F$-injective}, and \emph{$F$-nilpotent}.

Comparing classes of singularities in characteristic zero and positive characteristic is an important problem in the field. By \cites{Smith97,MS97,Hara98}, a characteristic zero singularity is rational if and only if its reduction modulo $p \gg 0$ is $F$-rational. Conjecturally, a characteristic zero singularity is Du Bois if and only if its reduction modulo  infinitely many primes $p$ is $F$-injective. The implication from right to left is a theorem due to Schwede \cite{Schwede09}, while the converse is proven in \cite{BST} assuming the weak ordinarity conjecture (Conjecture \ref{conj:weakordinarity}). 

The goal of this article is to discuss analogous results for $F$-nilpotent singularities, introduced in \cite{BB05} under the name ``close to $F$-rational singularities'' and also called ``$F$-rational up to nilpotents'' in \cite{BBLSZ}. 
\begin{defi}[{Proposition \ref{prop:EquivFNilpDef}}]
\label{defi:FnilpIntro} Let $(R,\frm)$ be a local $d$-equidimensional Noetherian reduced ring of characteristic $p>0$. We say that 
\begin{enumerate}
    \item $R$ is \emph{weakly $F$-nilpotent}, if  $H^i_\frm(R)$ is Frobenius nilpotent\footnote{Namely, for every $\alpha \in H^i_\m(R)$ there exists $e>0$ such that $F^e(\alpha)=0.$} for all $i < d$, 
    \item $R$ is \emph{$F$-nilpotent} if, in addition, every Frobenius $R$-submodule of $H^{d}_{\frm}(R)$, which is proper modulo every minimal prime, is also Frobenius nilpotent (see Proposition \ref{prop:EquivFNilpDef} for the formal statement).
\end{enumerate}

\end{defi}
The idea of $F$-nilpotency is that it measures the difference between $F$-rational and $F$-injective singularities.
\begin{defi} \label{defi:introDenseOpen}
 For a finite type locally equidimensional scheme $X$ over a field $k$ of characteristic zero, we say that $X$ is of \emph{open $F$-nilpotent type} (resp.\ \emph{open weakly $F$-nilpotent type}) if there exists a flat model $X_A$ over a finitely generated $\Z$-subalgebra $A$ of $k$ and a Zariski-open set of closed points $S \subseteq {\rm Spec}\, A$ such that $X_s$ is $F$-nilpotent (resp.\ weakly $F$-nilpotent) for every $s \in S$.
\end{defi}

It is natural to ask which characteristic zero singularities correspond to $F$-nilpotent ones. This question was addressed by Srinivas and Takagi \cite{Srinivas-Takagi} in the case of isolated singularities. More precisely, they conjectured that an $n$-dimensional normal isolated singularity $x \in X$ over $\mathbb{C}$ is of open $F$-nilpotent type if and only if  the zeroth graded piece of the Hodge filtration of local cohomology vanishes:
\[
{\rm Gr}^0_F H^i_{\{x\}}(X_{\rm an}, \mathbb{C}) = 0 \quad \text{for all } i \in \Z.
\]
Moreover, they established the implication from right to left with no extra assumptions, and proved the converse assuming the weak ordinarity conjecture. 

We propose the following conjecture, which extends Srinivas and Takagi's statement beyond the isolated case and provides a more conceptual interpretation of their condition.

\begin{conj} \label{conj:mainintro}
Let $X$ be an equidimensional variety defined over a field of characteristic zero and let $f \colon Y \to X$ be a resolution of singularities. Then the following statements hold.

{\setlength{\leftmargini}{2em} \begin{enumerate}
\setlength\itemsep{0.1em}
\item $X$ is of open $F$-nilpotent type if and only if $f^* \colon \DDB^0_X \to Rf_*\cO_Y$ is an isomorphism.
\item $X$ is of open weakly $F$-nilpotent type if and only if $\DDB^0_X$ is Cohen--Macaulay. 
\end{enumerate}}
\end{conj}
\noindent The map in the above conjecture is constructed by way of the functoriality of $\DDB^0$, namely as a  composition:
\begin{equation} \label{eq:keymap}
f^* \colon \DDB^0_X \to Rf_*\DDB^0_Y \xleftarrow{\cong} Rf_*\cO_Y.
\end{equation}
The condition that this map is an isomorphism measures the difference between rational and Du Bois singularities (see Remark \ref{rmk:MeasureDiffRatDuBois}).
\begin{rmk} One can check that $Rf_*\cO_Y$ is Cohen--Macaulay by Grauert-Riemenschneider vanishing, and so the condition that $f^* \colon \DDB^0_X \to Rf_*\cO_Y$ is an isomorphism implies that $\DDB^0_X$ is Cohen--Macaulay.
\end{rmk}

Our main theorem addresses one direction of this conjecture unconditionally, while the converse is proven assuming the weak ordinarity conjecture as in the work of Srinivas and Takagi.

\begin{thm} \label{thm-main}
Let $X$ be an equidimensional variety defined over a field of characteristic zero and let $f \colon Y \to X$ be a resolution of singularities.  Then:

{\setlength{\leftmargini}{2em} \begin{enumerate}
\setlength\itemsep{0.1em}
\item if $f^* \colon \underline{\Omega}_X^0 \to Rf_*\cO_Y$ is an isomorphism, then $X$ is of open $F$-nilpotent type, 

\item if $\DDB^0_X$ is Cohen--Macaulay, then $X$ is of open weakly $F$-nilpotent type. 
\end{enumerate}}
\noindent Moreover, the converses of the above statements hold assuming the weak ordinarity conjecture.
\end{thm}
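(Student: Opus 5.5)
The plan is to compare the reductions modulo $p$ of $\DDB^0_X$ and $Rf_*\cO_Y$ with Frobenius-stable objects in characteristic $p$, and then translate the $F$-nilpotence conditions into statements about local cohomology of these objects. Spread everything out over a finitely generated $\Z$-algebra $A$: $X_A$, $Y_A$, $f_A$, a hyperresolution computing $\DDB^0_{X_A}$, and the map $f^*$. Then shrink $\operatorname{Spec} A$ so that base change and the relevant vanishing (Grauert--Riemenschneider, cohomology and base change of $\DDB^0$) commute with passage to the closed fibres. At closed points $s$ of characteristic $p\gg0$ I will use two known comparisons. First, $Rf_{s*}\cO_{Y_s}$ agrees with the derived image of Frobenius-stable (Witt/``$\cO$ up to nilpotents'') cohomology, so Frobenius acts nilpotently on the cone of $\cO_{X_s}\to Rf_{s*}\cO_{Y_s}$ in the appropriate sense. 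Second, $\DDB^0_{X_s}$ receives $\cO_{X_s}$, and by Schwede/Bhatt-type arguments the kernel of $H^i_\m(\cO_{X_s})\to H^i_\m(\DDB^0_{X_s})$ is Frobenius nilpotent; equivalently, $\DDB^0$ reduced mod $p$ behaves like $\cO$ ``up to Frobenius nilpotence'', for instance via the $h$-topology or perfection $\DDB^0_{X_s}\otimes$ perfection $\simeq R\Gamma_h(\cO)^{\mathrm{perf}}$.

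For (2), the local cohomology $H^i_\m(\DDB^0_{X})$ vanishes for $i<d$ by the Cohen--Macaulay hypothesis, and this persists on the reductions after shrinking $S$. Combined with the fact that $\ker\big(H^i_\m(\cO_{X_s})\to H^i_\m(\DDB^0_{X_s})\big)$ is $F$-nilpotent, every class in $H^i_\m(\cO_{X_s})$ for $i<d$ is killed by a Frobenius power. This is exactly weak $F$-nilpotence by Proposition \ref{prop:EquivFNilpDef}, and it holds on a Zariski-open set of closed points. For (1), the additional input is the top cohomology. If $f^*$ is an isomorphism, then $H^d_\m(\cO_{X_s})$ modulo its $F$-nilpotent part injects into $H^d_\m(Rf_*\cO_{Y_s})$, whose Frobenius-stable proper submodules are known to vanish: this is the Smith/Hara argument via the test submodule $0^*$ and Grauert--Riemenschneider. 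Hence any proper Frobenius submodule of $H^d_\m(\cO_{X_s})$ maps into a Frobenius-stable proper submodule, which is zero, and therefore it consists of $F$-nilpotent elements. This gives $F$-nilpotence.

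For the converses, assume weak ordinarity. Following \cite{BST}, choose $S$ so that Frobenius acts bijectively on the relevant cohomologies of the hyperresolution fibres, so that $H^i_\m(\DDB^0_{X_s})$ equals the Frobenius-semisimple part $H^i_\m(\cO_{X_s})_{ss}$. $F$-nilpotence of $H^i_\m(\cO_{X_s})$ for $i<d$ then forces $H^i_\m(\DDB^0_{X_s})=0$ on a dense set. This lifts to characteristic zero by semicontinuity and the flatness of the model, giving Cohen--Macaulayness. Similarly, nilpotence of proper Frobenius submodules of $H^d_\m$ forces the cone of $f^*$ to have vanishing reductions.

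The main obstacle is the non-isolated setting. There local cohomology at closed points does not capture everything: one must check the conditions at all, including non-closed, points of $X_s$ uniformly in $s$, and handle the non-Artinian $H^i_\m$. I would first try to reduce to the statement about the complexes, namely that the cone of $\cO_{X_s}\to\DDB^0_{X_s}$, respectively the cone of $f^*$, is Frobenius nilpotent as a complex. That statement localizes automatically, and the local duality formulation of Proposition \ref{prop:EquivFNilpDef} would then take care of all points at once.
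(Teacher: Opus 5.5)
Your forward implications follow the paper's argument in different packaging. Write $D_s$ for the reduction of a model of $\DDB^0_X$. The argument then uses only three facts:
\begin{enumerate}
\item $D_s$ perfects to $\cO_{X_s,\mathrm{perf}}$, so $\ker\bigl(H^i_x(\cO_{X_s})\to H^i_x(D_s)\bigr)\subseteq 0^F$;
\item $0^*=0^{\rm GR}$ on an open set (Theorem \ref{thm:taureduction});
\item Cohen--Macaulayness and the isomorphism $f^*$ spread out.
\end{enumerate}
Together these give $0^*=0^{\rm GR}\subseteq 0^F$ and nilpotence of $H^i_x(\cO_{X_s})$ for $i<d$. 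This is exactly what the paper extracts from (\ref{eq:daggerDiagramFull}) and Lemma \ref{lem:RationalPerfDefofFNilpt}. The paper takes $D_s=R(\rho_s)_*\cO_{G_s}$ via (\ref{eq:bigBlowupDiagram}) and (\ref{eq:SchwedeCharP}), which spares you from checking that reductions of a cubical hyperresolution remain $h$-hypercovers.

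However, your first ``known comparison'' is false and must be deleted. Under Grauert--Riemenschneider vanishing, let $C$ be the cone of $\cO_{X_s}\to R(f_s)_*\cO_{Y_s}$. Then, Frobenius-equivariantly, $H^i_x(C)\cong H^{i+1}_x(\cO_{X_s})$ for $i<d-1$, $H^{d-1}_x(C)\cong 0^{\rm GR}=0^*$, and $H^{\geq d}_x(C)=0$. So Frobenius-nilpotence of $C$ \emph{is} $F$-nilpotence of $X_s$, i.e.\ the conclusion. It fails for Example \ref{eg-Examples}(2), which is $F$-injective but not $F$-nilpotent. Nothing in your argument actually uses this claim.

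The genuine gap is in the converses. Weak ordinarity must supply the following: for $s$ in a Zariski-dense set, Frobenius acts \emph{injectively} on $H^i_x(D_s)$ for all $i$ and all closed $x\in X_s$. Then $H^i_x(D_s)\hookrightarrow H^i_x(\cO_{X_s,\mathrm{perf}})$, and both converses follow at once. For (1), combine this with Lemma \ref{lem:RationalPerfDefofFNilpt}: a class dying in $H^i_x(R(f_s)_*\cO_{Y_s})$ dies in $H^i_x(\cO_{X_s,\mathrm{perf}})$. You must also intersect the dense set with the open sets where $X_s$ is (weakly) $F$-nilpotent and where Cohen--Macaulayness and perverse-injectivity transfer between $X$ and $X_s$ (\cite[Proposition 2.11]{kawakami2026higherfrationalsingularities}).

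This injectivity cannot be obtained as you describe, from Frobenius-bijectivity on the cohomology of the pieces of a hyperresolution:
\begin{itemize}
\item those pieces are not projective, so weak ordinarity does not apply to them directly;
\item bijectivity is not what one gets;
\item injectivity on local cohomology does not survive the cones that assemble $D_s$.
\end{itemize}
The paper's Lemma \ref{lem-Exceptional} gets it as follows. It uses the snc model $\DDB^0_X\cong R\rho_*\cO_G$ and applies \cite[Theorem 4.4]{BST} to the log resolution $(T,G)$ of $(W,X)$. This gives surjectivity of the Frobenius trace on $R^i(\pi_s)_*\omega_{T_s}(G_s)$, which Grauert--Riemenschneider vanishing for $\pi_s$ pushes to $R^i(\rho_s)_*\omega_{G_s}$. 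Dualising, using that $G_s$ is Cohen--Macaulay, gives the injectivity.

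Your stated ``main obstacle'' is also misplaced. All the conditions involved are detected at closed points, and $H^i_\m$ of coherent complexes is Artinian. The real uniformity issues are the two just named: the Frobenius-injectivity input, and transferring Cohen--Macaulayness and perverse-injectivity along the model.
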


We note that, as in the work of Srinivas and Takagi, the weak ordinarity conjecture is used for the implication from characteristic $p>0$ to  characteristic zero. This is the opposite of what happens in the study of $F$-injectivity. Moreover, it is not enough to assume that $X$ is of dense $F$-nilpotent type to deduce that $f^* \colon \underline{\Omega}_X^0 \to Rf_*\cO_Y$ is an isomorphism. Indeed, there are examples when this map is not an isomorphism and $X$ is of dense but not open $F$-nilpotent type (see Examples \ref{eg-Examples}). 

 \begin{rmk} \label{rmkInterpretation} The conditions for $X$ to be of (weak) $F$-nilpotent type in Conjecture~\ref{conj:mainintro} are not incidental, but part of a broader picture. Namely, the map $f^* \colon \underline{\Omega}_X^0 \to Rf_*\cO_Y$ is an isomorphism exactly when the Hodge rational homology level satisfies ${\rm HRH}(X) \geq 0$, while the condition that $\DDB^0_X$ is Cohen–Macaulay is equivalent to $c(X) \geq 0$. The first invariant was introduced in \cites{ParkPopa, DOR1} (called ``property $(D_0)$'' in the former) to measure Hodge-theoretic properties of singularities. The second was introduced in \cite{CDOIsolated} as a natural weakening of the complete intersection condition. The comparison becomes particularly clear for closed embeddings $X \subseteq Y$ into a smooth variety $Y$:
\begin{itemize} \item In characteristic $0$: 
\begin{align*}
\hspace{7.9em} c(X) \geq 0 \iff&\, F_0 \cH^q_X(\cO_Y) = 0 \text{ for all } q > {\rm codim}_Y(X),\\
{\rm HRH}(X) \geq 0 \iff&\,   {\footnotesize(1)}\ F_0 \cH^q_X(\cO_Y) = 0 \text{ for all } q > {\rm codim}_Y(X), \text{ and } \\
&\ {\footnotesize(2)}\  F_0 {\rm IC}_X^H(-c) = F_0 \cH^{{\rm codim}_Y(X)}_X(\cO_Y).
\end{align*}
\item In characteristic $p$:
\begin{align*}
X \text{ is weakly }F\text{-nilpotent} \iff& \cH^q_X(\cO_Y) = 0 \text{ for all } q > {\rm codim}_Y(X),\\
X \text{ is }F\text{-nilpotent} \iff&\,   {\footnotesize(1)}\  \cH^q_X(\cO_Y) = 0 \text{ for all } q > {\rm codim}_Y(X), \text{ and } \\
&\,  {\footnotesize(2)}\ \cL(X,Y) = \cH^{{\rm codim}_Y(X)}_X(\cO_Y).
\end{align*}
\end{itemize}
\noindent We refer to  Remark~\ref{rmk:BB}, as well as Section~\ref{ss:HRHl} and Remark~\ref{rmk:cHRHHodgeWeight} for the definitions and a more detailed discussion. 

In particular, our main theorem, contingent upon the weak ordinarity conjecture, shows that for $X$ of characteristic zero, there exists an open subset $S \subseteq \Spec A$ of closed points (see the notation in Definition \ref{defi:introDenseOpen}) such that
\[
F_0 \cH^q_X(\cO_Y) = 0 \text{ for all } q > {\rm codim}_Y(X)
\quad\Longleftrightarrow\quad
\cH^q_{X_s}(\cO_{Y_s}) = 0 \text{ for all } q > {\rm codim}_{Y_s}(X_s),
\]
together with an analogous statement for the intersection complexes ${\rm IC}_X^H$ and $\cL(X,Y)$.
\end{rmk}

Lastly, for completeness, we state the weak ordinarity conjecture from \cite[Conjecture 1.1]{MS11} and \cite[Conjecture 1.1]{BST}. Note that in the work of Takagi and Srinivas, a slightly weaker conjecture is assumed (equivalent when considered for varieties of all dimensions, but strictly weaker in fixed dimension), which allows them to prove their theorem unconditionally in dimension three. We do not pursue this here.
\begin{conj}[Weak ordinarity conjecture] \label{conj:weakordinarity}
Let $Y$ be a $d$-dimensional smooth projective variety over a characteristic zero field $k$. Given a model of $Y$ over a finitely generated $\Z$-subalgebra $A$ of $k$, there exists a Zariski-dense set of closed points $S \subseteq {\rm Spec}(A)$ such that Frobenius acts bijectively on $H^d(Y_s, \cO_{Y_s})$ for every $s \in S$.
\end{conj}

In fact, this conjecture is equivalent to Du Bois singularities being of $F$-injective type (see \cites{BST,MS11}).

\subsection{Acknowledgments}
The authors thank Bhargav Bhatt, Tatsuro Kawakami, Mircea Musta{\c{t}}{\u{a}}, and Karl Schwede for valuable conversations related to the content of the paper. Dirks was supported by the NSF grant DMS-1926686 and the NSF-MSPRF grant DMS-2303070. Witaszek was supported by NSF research grants DMS-2101897 and DMS-2401360.
\section{Preliminaries} 
\subsection{Notation and terminology} \label{ss:notation}
We refer to \cite[Tag 08XG]{stacks-project} or  \cite[Subsection 2.7]{KTTWYY1} for a quick review of local and Matlis dualities which will be extensively used in this article. Moreover, in this paper, we use the following notation:
\begin{itemize}
\item A \textit{variety} is a reduced  separated scheme of finite type over a field.

\item We say that a proper morphism $f \colon Y \to X$ of Noetherian schemes is \emph{birational} if it is an isomorphism over an open dense subset $U \subseteq X$. Given a proper birational morphism $f\colon Y\to X$, we denote by $\Exc(f)$ the \emph{reduced exceptional locus}, namely $f^{-1}(X \setminus U)^{\rm red}$ where $U$ is the largest open dense subset over which $f$ is an isomorphism.

\item Given a reduced Noetherian scheme $X$, a proper birational morphism $\pi \colon Y \to X$ is called \emph{a log resolution of $X$} if $Y$ is regular and $\mathrm{Exc}(\pi)$ is a simple normal crossing divisor. Assume in addition that $X$ is excellent. Then a log resolution is called \emph{strong} if it is an isomorphism over the regular locus of $X$. A log resolution of a pair $(X,Z)$ for a closed subset $Z \subseteq X$ containing the singular locus of $X$ is called \emph{strong} if it is an isomorphism over the complement of $Z$.

\item For a ring $R$ we denote by $R^\circ$ the complement of the union of all minimal primes. When $R$ is reduced, this is precisely the set of
non-zero-divisors of $R$. We follow the convention that $0 \not \in R^\circ$. 
\item Given a Noetherian scheme $X$, we say that $K \in D^b_{\rm coh}(X)$ is \emph{Cohen--Macaulay} if $H^i_x(K) = 0$ for all points $x \in X$ and $0 \leq i < \dim \cO_{X,x}$. This condition is sometimes called \emph{maximal} Cohen--Macaulay in the literature.
\item Given a positive characteristic ring $R$, its \emph{perfection} is the ring defined by the following formula:
\[
R_{\rm perf} = \varinjlim \left(R \xrightarrow{F} R \xrightarrow{F}  \ldots \right). 
\]
\item We shall implicitly use throughout this paper that $F$-finite Noetherian rings in characteristic $p>0$ are excellent.
\item Throughout this paper, whenever convenient, we implicitly use the recently proven fact that Noetherian $F$-finite schemes admit a dualising complex \cite{bhatt2026ffiniteschemesdualizingcomplex}. We refer to \cite[Tag 0A85]{stacks-project} for the definition of, and a discussion on, dualising complexes $\omega^\bullet_X$. When $X$ is equidimensional of dimension $d$, we normalize $\omega^\bullet_X$ so that it lies in degrees $[-d,0]$ and define the \emph{dualising sheaf} by $\omega_X := \cH^{-d}(\omega^\bullet_X)$.
\item We let $\DDB^i_X \in D^b_{\rm coh}(X)$ denote the $i$-th Du Bois complex of a variety $X$ over a field of characteristic $0$. When $X$ is smooth, one has $\DDB^i_X=\Omega^i_X[0]$. In general, it is given by
\[
\DDB^i_X := R\epsilon_{\kdot,*}\Omega^i_{X_\kdot},
\]
where $\epsilon_\kdot \colon X_{\kdot}\to X$ is a hyperresolution (see \cite[Chapter~7.3]{PetersSteenbrink} for details). Equivalently, $\DDB^i_X$ is the derived  sheafification of $\Omega^i$ on the $h$-site over $X$. In this paper, we denote $\cH^0(\DDB^i_X)$ by $\Omega^i_{X,h}$. 
\end{itemize}

In this article we shall extensively study the Deligne-Du Bois complex $\DDB^0_X$. A key result about it that we shall often refer to in this article is the \emph{injectivity theorem}. We formulate it here as a dual statement. 
\begin{thm}[{\cite[Theorem 3.3]{KS}}] \label{thm:injectivity}
Let $X$ be a reduced  separated finite type scheme defined over a field of characteristic zero. Then the natural map
\[
H^i_x(\cO_X) \to H^i_x(\DDB^0_X)
\]
is surjective for every point $x \in X$ and integer $i \in \Z$.
\end{thm}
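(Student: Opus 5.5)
The plan is to deduce the local statement from a global Hodge-theoretic surjectivity, namely the degeneration of the Hodge-to-de Rham spectral sequence for proper varieties and for cohomology with supports. First I would reduce to the case where $x$ is a closed point. Let $W=\overline{\{x\}}$. Local cohomology $H^i_x$ is the colimit of $H^i_{W\cap U}(U,-)$ over open neighborhoods $U$ of $x$. Choose a dominant map from a neighborhood of $W$ to a variety with function field $k(W)$ (for instance a Noether normalization of $W$ spread out). Base change along the generic point of that map then turns $x$ into a closed point of a variety over the larger field $K=k(W)$. The key input is that $\DDB^0$ commutes with flat (e.g.\ smooth or field-extension) base change and with localization. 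Granting this, surjectivity over $K$ at the closed point gives surjectivity at $x$. By a Lefschetz-principle argument we may further assume $k=\C$.

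Next I would globalize. Take a closed point $x\in X$. By excision, $H^i_x$ only depends on a neighborhood of $x$, so we may replace $X$ by a compactification $\overline X$ (still reduced), with $\DDB^0$ compatible with restriction to opens. Now $Z=\{x\}$ is proper, and the relevant objects are the local cohomology groups $H^i_Z(\overline X^{\rm an},\C)$. These carry Deligne's mixed Hodge structure, computed via a cubical hyperresolution $\epsilon_\kdot\colon X_\kdot\to\overline X$ together with the hyperresolution of the pair $(\overline X, \overline X\setminus Z)$. The Hodge filtration on them is induced by the filtered de Rham complex $\DDB^\kdot_{\overline X}$, with $\mathrm{Gr}^0_F$ given by $\DDB^0$. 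Concretely, I would use the triangle
\[
R\Gamma_Z(\overline X,-)\to R\Gamma(\overline X,-)\to R\Gamma(\overline X\setminus Z,-)
\]
applied compatibly to $\C$, $\cO$ and $\DDB^0$. The mixed-Hodge-theoretic statement to invoke is that the spectral sequence of the filtered complex degenerates at $E_1$ for cohomology with proper supports. This degeneration follows from strictness of morphisms of mixed Hodge structures. It yields that
\[
H^i_Z(\overline X^{\rm an},\C)\to \mathrm{Gr}^0_F H^i_Z(\overline X^{\rm an},\C)=H^i_Z(\overline X,\DDB^0_{\overline X})
\]
is surjective.

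Finally, the map from $\C$ to $\DDB^0$ factors as $\C_{\overline X}\to\cO_{\overline X}\to\DDB^0_{\overline X}$ (using GAGA to pass between analytic and algebraic cohomology with support in the proper set $Z$). Hence the surjection above factors through $H^i_Z(\overline X,\cO_{\overline X})=H^i_x(\cO_X)$. This proves surjectivity of $H^i_x(\cO_X)\to H^i_x(\DDB^0_X)$.

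The main obstacle is the middle step. One must justify that $H^i_Z(\overline X,\DDB^0)$ really is $\mathrm{Gr}^0_F$ of the mixed Hodge structure on local cohomology, and that $E_1$-degeneration holds there, not merely for the absolute cohomology of a proper variety. I would handle this by writing $R\Gamma_Z$ as the cocone of restriction to $\overline X\setminus Z$. The absolute cohomology of $\overline X$ is a mixed Hodge complex. For the open complement, I would use a hyperresolution of the pair with log poles, so that its cohomology is also a mixed Hodge complex. The cocone of a morphism of mixed Hodge complexes is again one, and strictness then gives degeneration. The reduction to closed points is secondary but needs care about base change of $\DDB^0$ along non-finite-type field extensions.
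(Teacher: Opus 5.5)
The paper does not prove this statement; it quotes it from Kov\'acs--Schwede. Your proposal has a genuine gap in the middle step, and the route cannot be repaired as it stands.

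\textbf{The false identification.} You claim that $\mathrm{Gr}^0_F H^i_Z(\overline X^{\rm an},\C)=H^i_Z(\overline X,\DDB^0_{\overline X})$ for $Z=\{x\}$, and hence that $H^i_Z(\overline X^{\rm an},\C)\to H^i_x(\DDB^0_X)$ is surjective. Both fail already for smooth $X$. Take $X=\mathbf{A}^1_{\C}$ and $x=0$. Then $H^i_{\{x\}}(X^{\rm an},\C)$ is $\C(-1)$ for $i=2$ and zero otherwise, so its $\mathrm{Gr}^0_F$ vanishes in every degree. On the other hand, $H^1_x(\DDB^0_X)=H^1_x(\cO_X)\cong\C[t,t^{-1}]/\C[t]\neq 0$.

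More generally, $H^i_{\{x\}}(X^{\rm an},\C)$ is finite-dimensional, whereas local cohomology of a coherent complex at a point of positive dimension is typically an infinite-dimensional Artinian module. So no such surjection exists in general.

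\textbf{Where the error enters.} The problem is the cocone. The term $R\Gamma(\overline X\setminus Z,\DDB^0)$ is Zariski cohomology of a coherent complex on a non-proper scheme. It is not $\mathrm{Gr}^0_F$ of $H^\ast(\overline X\setminus Z,\C)$. As you note yourself, the Hodge filtration on the open complement is computed by log complexes on a compactification. Its $\mathrm{Gr}^0_F$ is therefore coherent cohomology on a proper model. For example, if $\overline X$ is smooth, blowing up $x$ shows it equals $H^\ast(\overline X,\cO_{\overline X})$, so the cocone vanishes on $\mathrm{Gr}^0_F$. Consequently $E_1$-degeneration for that mixed Hodge complex tells you nothing about $H^i_x(\DDB^0_X)$.

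\textbf{What is missing.} You need a device that lets global Hodge theory on proper varieties detect infinite-dimensional local coherent data. In the argument behind the cited result, this device is twisting by an ample line bundle.
\begin{enumerate}
\item Reduce to $X$ projective with $L$ ample.
\item Take a cyclic cover $Y\to X$ branched along a general member of $|L^m|$. The Hodge-theoretic surjection $H^i(Y^{\rm an},\C)\to H^i(Y,\DDB^0_Y)$ factors through $H^i(Y,\cO_Y)$. Its eigenspace decomposition gives surjectivity of $H^i(X,L^{-n})\to H^i(X,\DDB^0_X\otimes L^{-n})$ for $n$ arbitrarily large.
\item Serre duality turns this into injectivity of $H^{-i}(X,R\mathcal{H}om(\DDB^0_X,\omega^\bullet_X)\otimes L^{n})\to H^{-i}(X,\omega^\bullet_X\otimes L^{n})$.
\item For $n\gg 0$, Serre vanishing and global generation give injectivity of the sheaf maps $h^{j}(R\mathcal{H}om(\DDB^0_X,\omega^\bullet_X))\to h^{j}(\omega^\bullet_X)$. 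This is the form in which the cited theorem is stated; the paper records its dual.
\item Local duality at an arbitrary, not necessarily closed, point $x$ then gives the stated surjectivity.
\end{enumerate}
This argument also makes your reduction to closed points and your base change along non-finite-type field extensions unnecessary.
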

In particular, this shows that
\begin{equation} \label{eq:DDBCM}
\text{ $X$ is Cohen--Macaulay} \implies \text{ $\DDB^0_X$ is Cohen--Macaulay.}
\end{equation}

Moreover, with the same assumptions, we have another injectivity-type theorem:
\begin{equation} \label{eq:injectivity2}
f^* \colon H^i_x(\cO_X) \to H^i_x(Rf_*\cO_Y)
\end{equation}
is surjective for every point $x \in X$, for every $i\in \Z$, and for every resolution of singularities $f \colon Y \to X$. This result, however, is rather easy. Indeed, $Rf_*\cO_Y$ is Cohen--Macaulay by local duality and Grauert-Riemenschneider vanishing, and so it remains to show that
\begin{equation} \label{eq:injectivity2ford}
f^* \colon H^d_x(\cO_X) \to H^d_x(Rf_*\cO_Y)
\end{equation}
is surjective for $d := \dim \cO_{X,x}$. Using the long exact sequence of local cohomology associated to the short exact sequence
\[
0 \to \cO_X \to \nu_* \cO_{X^{\rm n}} \to \cF \to 0,
\]
with $\nu \colon X^{\rm n} \to X$ being the normalization and $\cF$ being the natural cokernel, we reduce to the case of $X$ being normal. Then local duality translates (\ref{eq:injectivity2ford}) into  the injectivity of the natural map ${\rm Tr}_f \colon f_*\omega_Y \hookrightarrow \omega_X$  (cf.\ \cite[Tag 0AWK]{stacks-project}).

In particular, using the factorisation $f^* \colon \cO_X \to \DDB^0_X \to Rf_*\cO_Y$ we get that
\begin{equation} \label{eq:injectivity3}
f^* \colon H^i_x(\DDB^0_X) \to H^i_x(Rf_*\cO_Y)
\end{equation}
is surjective for every point $x \in X$ and every integer $i \in \Z$.

\subsection{Birational singularities}
We briefly review classical singularities in characteristic zero.
\begin{defi} \label{def:DuBoisRational}
Let $(R,\frm)$ be a local $d$-dimensional reduced ring essentially of finite type over a field of characteristic zero. We say that $R$ has \emph{rational singularities} if for every (equivalently, for some) resolution of singularities $f \colon Y \to X := \Spec R$, the natural map
\[
f^* \colon \cO_X \to Rf_*\cO_Y
\]
is an isomorphism. We say that $R$ has \emph{Du Bois singularities} if the natural map
\[
\cO_X \to \DDB^0_X
\]
is an isomorphism.
\end{defi}
\noindent  Note that rational singularities are automatically normal. On the other hand, Du Bois singularities need not be normal or even integral.

When $X := \Spec R$ has isolated normal singularities, the condition that $X$ is Du Bois is equivalent by \cite{SteenbrinkDB}*{Pg.\ 533} to the natural map
\[
R^if_*\cO_Y \to H^i(E, \cO_E)
\]
being an isomorphism for every $i > 0$, where $f \colon Y \to X$ is a log resolution of singularities with exceptional locus $E$ and which is an isomorphism over the regular locus of $X$ (see (\ref{eq:HPS})). Moreover, by Grothendieck duality and Grauert-Riemenschneider vanishing, $R$ has rational singularities if and only if $f_*\omega_Y = \omega_X$ and $X$ is Cohen--Macaulay.
\begin{rmk} \label{rmk:DuBoisLC}
Set $X := \Spec R$. By Theorem \ref{thm:injectivity}, the ring $R$ is Du Bois if and only if
\[
H^i_\m(\cO_X) \to H^i_\m(\DDB^0_X)
\]
is injective for every integer $i \in \Z$. Indeed, as can be readily deduced from local duality, a map in $D^b_{\rm coh}(\cO_X)$ is an isomorphism if it induces an isomorphism on all local cohomologies at maximal ideals. An analogous statement holds for rational singularities: $R$ is rational if and only if
\begin{equation} \label{eq:remRational}
H^i_\m(\cO_X) \to H^i_\m(Rf_*\cO_Y)
\end{equation}
is injective for every integer $i \in \Z$ (see (\ref{eq:injectivity2})). This observation allows for a better comparison of the notions of rationality and Du Bois with their positive characteristic counterparts: $F$-rational and $F$-injective.
\end{rmk}

\begin{rmk} \label{rmk:MeasureDiffRatDuBois}
We see that the condition from Conjecture \ref{conj:mainintro}, that the map
\[
f^*\colon \DDB^0_X \to Rf_*\cO_Y
\]
is an isomorphism, measures the difference between having rational and Du Bois singularities:
\[
\text{ $X$ is rational } \iff \text{$X$ is Du Bois and $f^*\colon \DDB^0_X \to Rf_*\cO_Y$ is an isomorphism.}
\]
The implication $(\implies)$ is non-obvious and follows from Theorem \ref{thm:injectivity} and Remark \ref{rmk:DuBoisLC}.
\end{rmk}

The above naturally leads us to the construction of a sheaf measuring rationality. For the sake of future discussion, we define it in the more general setting   of arbitrary characteristic. We restrict to the local case as this is all that will be needed in our article.
\begin{defi} \label{def:GR}
Let $(R,\m)$ be a local $d$-equidimensional reduced ring essentially of finite type over a field. We define the \emph{Grauert-Riemenschneider sheaf} as
\[
\J(\omega_R) := \bigcap_{f \colon Y \to X} {\rm im}\left({\rm Tr}_f \colon f_*\omega_Y \to \omega_R\right) 
\]
where the intersection is taken over all projective birational morphisms $f \colon Y \to X := \Spec R$ and ${\rm Tr}_f$ denotes the trace map.
\end{defi}

\begin{rmk}
Given the torsion-freeness of $\omega_Y$, we can also write this in a more familiar form \[\J(\omega_R) = \bigcap_{f \colon Y \to X} f_*\omega_Y.\]
\end{rmk}

One can check that if a resolution of singularities $f \colon Y \to X$ exists, then $\J(\omega_R) = {\rm im}({\rm Tr}_f \colon f_*\omega_Y \to \omega_R)$. Indeed, if $g \colon Z \to Y$ is any projective birational morphism, then the regularity of $Y$, the torsion-freeness of $\omega_Z$, and the existence of the pullback map \[
\omega_Y = \Omega^d_Y \xrightarrow{g^*} g_*\Omega^d_Z \to g_*(\Omega^d_Z)^{**} = g_*\omega_Z
\]
imply that $g_*\omega_Z = \omega_Y$. The proof then follows by taking a common birational refinement of any map with this resolution.

\begin{rmk}
We reinterpret the Grauert-Riemenschneider sheaf using local cohomology as this will be needed later on. With notation as in Definition \ref{def:GR}, we define
\[
0^{\rm GR}_{H^d_\m(R)} := \bigcup_{f \colon Y \to X} {\rm Ker}\Big(f^* \colon H^d_{\m}(R) \to H^d_{\m}(Rf_*\cO_Y)\Big)
\]
where the union is taken over all projective birational morphisms $f \colon Y \to X$. As before, if a resolution of singularities $f \colon Y \to X$ exists, then one can check that (cf.\ (\ref{eq:GRMD})):
\[
0^{\rm GR}_{H^d_\m(R)} = {\rm Ker}\Big(f^* \colon H^d_{\m}(R) \to H^d_{\m}(Rf_*\cO_Y)\Big).
\]
Finally, $0^{\rm GR}_{H^d_\m(R)}$ is related to the Grauert-Riemenschneider sheaf via the following formula, provided a resolution exists:
\begin{equation} \label{eq:GRMD}
\J(\omega_R)^{\wedge} \cong \Big(H^d_\m(R)/0^{\rm GR}_{H^d_\m(R)} \Big)^\vee
\end{equation}
where $(-)^{\wedge}$ is the completion at $\m$ and  $(-)^{\vee}$ denotes Matlis duality. 
\end{rmk}

\subsection{F-singularities}

In this subsection, we review $F$-rational and $F$-injective  singularities. We refer to \cite{SchwedeSmith_FrobeniusSingularities} for more details.

\begin{defi}[{\cite[Definition 1.2]{Srinivas-Takagi}}]
Let $(R,\frm)$ be a local $d$-dimensional $F$-finite Noetherian ring of characteristic $p>0$. We say that $R$ is \emph{$F$-injective} if the Frobenius action on local cohomology
\[
F \colon H^i_\frm(R) \to H^i_\frm(F_*R)
\]
is injective for every $i \in \Z$. We say that $R$ is \emph{$F$-rational} if $R$ is Cohen--Macaulay and for every non-zero divisor $c \in R^\circ$ the composite map
\begin{equation*}
H^d_\frm(R) \to H^d_\frm(F^e_*R) \xrightarrow{\cdot F^e_* c } H^d_\frm(F^e_*R)
\end{equation*}
is injective for some integer $e>0$ (equivalently, for every $e \gg 0$). We say that a positive characteristic Noetherian ring is $F$-injective (resp.\ $F$-rational) if it is so after localisation at every maximal ideal.
\end{defi}

One can check that $F$-injective singularities are automatically reduced (see \cite[Corollary 3.5]{DM24}) and that $F$-rationality implies normality (see \cite[Lemma 2.34]{bstAlt} or  \cite[Corollary 2.14 and Proposition 3.9]{TW18}).

\begin{rmk}[{cf.\ \cite[Proposition 3.19]{TW18}}] \label{rmk:dualDefFSing}
For Cohen--Macaulay rings $R$ as above, using Matlis duality we see that:
\begin{align*}
\text{ $R$ is $F$-injective } &\iff T \colon F_*\omega_R \to \omega_R \text{ is surjective, }\\
\text{ $R$ is $F$-rational } &\iff \forall_{c \in R^\circ} \exists_{e}\ T^{e,c} \colon F^e_*\omega_R \to \omega_R \text{ is surjective},\\
&\iff \forall_{c \in R^\circ} \exists_{e_0} \forall_{e \geq e_0}\ T^{e,c} \colon F^e_*\omega_R \to \omega_R \text{ is surjective},\\
&\iff \forall_{c \in R^\circ} \forall_{e_0} \exists_{e \geq e_0}\ T^{e,c} \colon F^e_*\omega_R \to \omega_R \text{ is surjective},
\end{align*}
where $T^{e,c} \colon F^e_*\omega_R \to \omega_R$ is the Frobenius trace map $T^e \colon F^e_*\omega_R \to \omega_R$ pre-composed with multiplication by $F^e_*c$. The choice of quantifiers in the third equivalent definition of $F$-rationality is most natural in the context of parameter test ideals and tight closure below. 
\end{rmk}

\subsection{Test ideals} \label{ss:testideals}
In this subsection, following the brief recollection from \cite[Section 2.2]{KTTWYY3},  we review the classical theory of test ideals and refer to standard sources \cites{schwedetucker12,Tak21,SchwedeSmith_FrobeniusSingularities} for more details. We shall work under the following assumptions.
\begin{setting} \label{setting:TestIdealsReview}
In what follows, $R$ is a  $d$-equidimensional reduced Noetherian $F$-finite ring of characteristic $p>0$. 
\end{setting}
\noindent 
Although \cite[Section 2.2]{KTTWYY3} is written for complete local
domains, the definitions and statements recalled below hold in the
present reduced, locally equidimensional setting; we indicate general
references at the relevant points.

\begin{defi}[{cf.\ \cite[Definition 7.1 and Lemma 7.6]{epstein2026closureoperationsinducedresolutions}}] \label{defi:idealspos} Under the assumptions of Setting \ref{setting:TestIdealsReview}, we define the  $R$-submodule $\sigma(\omega_R) \subseteq \omega_R$ as follows:
\begin{align*}
\sigma(\omega_R) := \bigcap_{e>0} {\rm Im}(T^e \colon F^e_*\omega_R \to \omega_R).
\end{align*}
Similarly, we define the \emph{parameter test submodule} $\tau(\omega_R) \subseteq \omega_R$ by the formula:
\begin{align*}\tau(\omega_R) &:= \bigcap_{c \in R^\circ} \bigcap_{e_0>0} \sum_{e \geq e_0} {\rm Im}(T^{e,c} \colon F^e_*\omega_R \to \omega_R).
\end{align*}
\end{defi}

By Remark \ref{rmk:dualDefFSing}, we see that $\sigma(\omega_R)$ and $\tau(\omega_R)$ measure $F$-injectivity and $F$-rationality of Cohen--Macaulay rings $R$, respectively: 
\begin{align*}
\text{ $R$ is $F$-injective } &\iff \sigma(\omega_R)=\omega_R, \text{ and }\\
\text{ $R$ is $F$-rational } &\iff \tau(\omega_R)=\omega_R.
\end{align*}

\begin{rmk}[{cf.\ \cite[Definition 1.3]{Srinivas-Takagi}}]
We relate the above ideals to the study of local cohomology. Let $(R,\m)$ be a local ring satisfying
Setting \ref{setting:TestIdealsReview}. We define the \emph{Frobenius closure of $0$ in $H^d_\frm(R)$} as follows:
\begin{align*}
0^F_{H^d_\m(R)} :=&\ \bigcup_{e>0} {\rm Ker}\Big(F^e \colon H^d_{\m}(R) \to H^d_{\m}(F^e_*R)\Big) 
=\  {\rm Ker}\Big(H^d_{\m}(R) \to H^d_{\m}(R_{\rm perf})\Big).
\end{align*}
Similarly, we define the \emph{tight closure of $0$ in $H^d_\frm(R)$} via the formula:
\begin{align*}
0^*_{H^d_\m(R)} &:= \bigcup_{\substack{c \in R^{\circ},\\ e_0 \in \Z_{>0}}} \bigcap_{e \geq e_0} K^{e,c}, \quad \text{ where }\\
K^{e,c} &:= {\rm Ker}\Big(H^d_{\m}(R) \xrightarrow{F^e} H^d_{\m}(F^e_*R) \xrightarrow{\cdot F^e_*c} H^d_{\m}(F^e_*R) \Big) 
\end{align*}
Equivalently, for $z \in H^d_\m(R)$, 
    we have that $z \in 0^*_{H^d_\m(R)}$ if and only if 
    there exists $c \in R^{\circ}$ and an integer $e_0>0$ such that 
    $cF^e(z)=0$ for every integer $e \geq e_0$.\\

By definition $0^F_{H^d_\m(R)} \subseteq 0^*_{H^d_\m(R)}$.  Moreover, these closure operations determine the submodules from Definition \ref{defi:idealspos}:
\begin{align} \label{eq:tauMD}
\sigma(\omega_R)^{\wedge} &\cong \Big(H^d_\m(R)/0^F_{H^d_\m(R)} \Big)^\vee, \text{ and } \\
\tau(\omega_R)^{\wedge} &\cong \Big(H^d_\m(R)/0^*_{H^d_\m(R)} \Big)^\vee, \nonumber
\end{align}
where $(-)^{\wedge}$ is the completion at $\m$ and  $(-)^{\vee}$ denotes Matlis duality. The first statement is immediate by Matlis duality and stabilisation (cf.\ Remark \ref{rmk:stabilisation}). The second statement follows from, for instance,  \cite[Lemma 7.6]{epstein2026closureoperationsinducedresolutions} (cf.\ \cite[Definition 2.3(3) and Definition 2.9(3)]{KTTWYY3}).
\end{rmk} 

\begin{rmk} \label{rmk:analyticallyIrred}
In fact, $\tau(\omega_R)$ is the smallest non-zero submodule of $\omega_R$ which is stable under $T \colon F_*\omega_R \to \omega_R$ and non-zero on every irreducible component of $\Spec R$ of maximal dimension  (see \cite[Definition 2.33]{bstAlt}, cf.\ \cite{Smith97}, \cite[Definition 2.9(6)]{KTTWYY3}).

In particular, the tight closure $0^*_{H^d_\m(R)}$ is  the largest $R$-submodule of $H^d_\m(R)$ which is Frobenius stable and does not surject onto any $H^d_\m(R/\mathfrak{q}_i)$ for minimal primes $\mathfrak{q}_1, \ldots, \mathfrak{q}_m$ of $R$. From this, it is easy to deduce that  under the assumptions of Setting \ref{setting:TestIdealsReview}, we have the following equivalence (\cite[Theorem 2.6]{Smith97}):
\begin{enumerate}
    \item $R$ is $F$-rational,
    \item $R$ is Cohen--Macaulay and $H^d_\frm(R)$ admits no proper non-zero Frobenius stable $R$-submodule.
\end{enumerate}
\end{rmk}

\begin{rmk}[Stabilisation] \label{rmk:stabilisation}
By \cite[Proposition 2.14]{BB11Finiteness}, we have that \[
\sigma(\omega_R)= {\rm Im}(T^e \colon F^e_*\omega_R \to \omega_R)
\]
for every $e \gg 0$.  Analogously, one can show that
\begin{align*}\tau(\omega_R) &=  \sum_{e \geq 0} {\rm Im}(T^{e,c} \colon F^e_*\omega_R \to \omega_R)
\end{align*}
for a particular element $c \in R^\circ$ called a \emph{big test element} (see \cite[Lemma 4.3]{bstAlt}, cf.\ \cite[Definition 2.9]{KTTWYY3}). Explicitly, one may take any $c \in R^\circ$ such that the localisation $R_c$ is regular, and then replace $c$ by a sufficiently large power. Moreover, by Noetherianity, one can check that
\[
\tau(\omega_R) = {\rm Im}(T^{e,c} \colon F^e_*\omega_R \to \omega_R)
\]
for some $e>0$ sufficiently large, provided that $c$ is replaced by its square (cf.\ \cite[Definition 2.9(7)]{KTTWYY3}).

In particular, by this remark, the formation of both $\sigma(\omega_R)$ and $\tau(\omega_R)$ commutes with localisation. Moreover, it commutes with completion as well.
\end{rmk}

Finally, we compare $\tau(\omega_R)$ with the Grauert-Riemenschneider sheaf from characteristic zero  and refer to \cite{SchwedeSmith_FrobeniusSingularities} for more details and \cite[Proposition 2.12]{epstein2026closureoperationsinducedresolutions} for the statement with the current assumptions.

\begin{thm}[{\cites{Smith00,Hara05}}] \label{thm:taureduction}
Let $X$ be an equidimensional affine variety defined over a field $k$ of characteristic zero and let $X_A$ be a flat model over a finitely generated $\Z$-subalgebra $A$ of $k$. Then there exists a Zariski-open subset of closed points $S \subseteq \Spec A$ such that for every $s \in S$:
\[
\tau(\omega_{X_s}) = \J(\omega_{X_s}).
\]
\end{thm}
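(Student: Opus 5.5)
The plan is to compare both sides with a fixed log resolution spread out over $A$, following the argument of Hara. First, choose a log resolution $f \colon Y \to X$ over $k$ that is projective, together with an $f$-ample divisor $-H$ where $H$ is effective and supported on $\Exc(f)$. After enlarging $A$ and shrinking $\Spec A$, we may assume the following:
\begin{itemize}
\item $A$ is smooth over $\Z$, and $f_A \colon Y_A \to X_A$ is a projective birational morphism of flat models.
\item $Y_A$ is smooth over $A$ and $\Exc(f_A)$ is a relative simple normal crossing divisor.
\item $-H_A$ is $f_A$-ample, and every residue characteristic is larger than $\dim X$.
\end{itemize}
By generic freeness and base change for the finitely many coherent sheaves involved, for closed points $s$ in an open set $S$ the fibre $f_s$ is a log resolution of $X_s$. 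The trace image of $f_{s,*}\omega_{Y_s}$ is then computed by reducing $f_*\omega_Y \subseteq \omega_X$ modulo $s$. By the discussion after Definition~\ref{def:GR}, this image is $\J(\omega_{X_s})$. So it suffices to prove $\tau(\omega_{X_s}) = f_{s,*}\omega_{Y_s}$ for $s \in S$, possibly after shrinking $S$ further.

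\emph{The inclusion $\tau \subseteq \J$.} This holds in every characteristic $p$. The Frobenius trace on $Y_s$ is compatible with pushforward, so $f_{s,*}\omega_{Y_s} \subseteq \omega_{X_s}$ is stable under $T \colon F_*\omega_{X_s} \to \omega_{X_s}$. Since $f_s$ is birational, this submodule is nonzero on every irreducible component. By the minimality in Remark~\ref{rmk:analyticallyIrred}, which we apply after localising at each point (this is allowed because $\tau$ commutes with localisation by Remark~\ref{rmk:stabilisation}), we get $\tau(\omega_{X_s}) \subseteq f_{s,*}\omega_{Y_s}$.

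\emph{The inclusion $\J \subseteq \tau$.} This is the main step. Fix $c \in R^\circ$ such that $X_c$ is regular; we may take it to be defined over $A$, and take $H_A$ so that $\mathrm{div}(f^*c) \le mH$ for some $m$. By Remark~\ref{rmk:stabilisation}, it is enough to show that
\[
H^0\big(Y_s, F^e_*\omega_{Y_s}(-\lceil p^{e}\varepsilon H\rceil)\big) \xrightarrow{\ T^e\ } H^0\big(Y_s,\omega_{Y_s}\big)
\]
is surjective for all $e \gg 0$, for a suitable small $\varepsilon>0$. Indeed, composing with $f_{s,*}$ absorbs multiplication by $c$ and yields $f_{s,*}\omega_{Y_s} \subseteq \sum_e \mathrm{Im}(T^{e,c})$. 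Working locally on $X_s$, which is affine, the cokernel of the trace $F^e_*\omega_{Y_s}(-\lceil p^e\varepsilon H\rceil) \to \omega_{Y_s}$ is filtered via the Cartier operator by sheaves of locally exact forms $B_e\Omega^\bullet_{Y_s}(\log E)$ twisted by $\cO(-\lceil p^i\varepsilon H\rceil)$. Surjectivity on $H^0$ therefore reduces to vanishings of the form
\[
H^j\big(Y_s, \Omega^{i}_{Y_s}(\log E_s)\otimes \cO(-p^{r}\lceil\varepsilon H\rceil)\big) = 0 \quad \text{for } i+j > \dim X.
\]
These are relative Akizuki--Nakano type vanishings for the relatively anti-ample, hence relatively ample after sign, divisor $-H$.

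I expect the main obstacle to be obtaining these vanishings uniformly in $e$ (equivalently in $r$) with $s$ ranging over a single open set. My first approach is to use that $Y_s$ together with $E_s$ lifts to $W_2(k(s))$, which holds because $A$ is smooth over $\Z$ and the whole situation is defined over $A$. Since $p > \dim X$, the Deligne--Illusie decomposition then gives the required vanishing for $r=1$ from the characteristic-zero vanishing, which spreads out and holds on an open set. The vanishing for higher $r$ would then follow by the induction through the exact sequences $0 \to B_r \to B_{r+1} \to B_1 \otimes F^r_* \to 0$, twisted by powers of $p$, which is Hara's trick. Once the vanishing holds for all $e$, surjectivity gives $f_{s,*}\omega_{Y_s} \subseteq \tau(\omega_{X_s})$. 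Combined with the first inclusion, this proves the theorem on the open set $S$.
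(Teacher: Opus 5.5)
The paper gives no proof of this statement; it cites Smith and Hara, and Epstein et al.\ for the reduced equidimensional setting. Your outline follows the strategy of those works, and your easy inclusion $\tau(\omega_{X_s})\subseteq \J(\omega_{X_s})$ is correct: it uses the $T$-stability of the trace image of $f_{s,*}\omega_{Y_s}$ together with the minimality of $\tau$. In the hard inclusion, however, one step fails as written. You fix $H$ effective and \emph{supported on} $\Exc(f)$, and then ask for $\mathrm{div}(f^*c)\le mH$. Since $X_c$ is regular, $c$ vanishes on $\mathrm{Sing}(X)$, so if $X$ is singular then $V(c)$ is a non-empty hypersurface. When $X$ is normal, for example, $f$ is an isomorphism off a closed set of codimension $\geq 2$. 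Hence the strict transform of $V(c)$ is a non-exceptional component of $\mathrm{div}(f^*c)$, and no exceptional $H$ can dominate it. To repair this, choose $f$ so that $E:=f^{-1}(V(c))_{\rm red}$ is snc (note $V(c)\supseteq\mathrm{Sing}(X)$). Then replace $H$ by $H+\mathrm{div}(f^*c)$; this is still effective and $f$-anti-ample because $\mathrm{div}(f^*c)$ is principal. All log poles must then be taken along this larger $E$.

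The vanishing you reduce to is also misstated, and the ``main obstacle'' is misdiagnosed. Throughout, $H$ and $E$ are the corrected ones.

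\emph{Rounding.} For small $\varepsilon$ one has $\lceil\varepsilon H\rceil=H_{\rm red}$, and $-H_{\rm red}$ is in general not $f$-ample (e.g.\ for the minimal resolution of an $A_3$ surface singularity). So the twist must be $\cO(-\lceil p^r\varepsilon H\rceil)$, not $\cO(-p^r\lceil\varepsilon H\rceil)$. In the range $i+j>\dim X$ the log forms must also carry the $(-E)$ twist.

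\emph{The d\'evissage.} What obstructs surjectivity on $H^0$ is the \emph{kernel}, not the cokernel, of $T\colon F_*\omega_Y(-\lceil p^{k+1}\varepsilon H\rceil)\to\omega_Y(-\lfloor\lceil p^{k+1}\varepsilon H\rceil/p\rfloor)$. The target contains $\omega_Y(-\lceil p^k\varepsilon H\rceil)$, so these steps chain. Run the Cartier d\'evissage through $F_*\Omega^\bullet_Y(\log E)(-E-\lceil p^{k+1}\varepsilon H\rceil)$, whose cohomology sheaves are $\Omega^i_Y(\log E)(-E-\lfloor\lceil p^{k+1}\varepsilon H\rceil/p\rfloor)$. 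You then need two vanishings: $H^j$ of the terms for $i+j=\dim X$, $j\geq 1$, and $H^j$ of the cohomology sheaves for $i+j=\dim X+1$, $j\geq 2$.

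\emph{Uniformity.} Every $p$-dependent twist here has the form $\cO(-mH)$ with $m\geq\lfloor p\varepsilon\rfloor$, tensored with one of finitely many fixed locally free sheaves. Relative Serre vanishing for $Y_A\to X_A$ kills these, and by $A$-flatness this passes to every fibre. So it holds on the open set where $p\varepsilon\gg 0$. The only $p$-independent input (the case $k=0$) is $R^jf_*\Omega^i_Y(\log E)(-E)=0$ for $i+j>\dim X$, which is Steenbrink vanishing. This is a single characteristic-zero statement and spreads out once.

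So Deligne--Illusie is not needed. Moreover, your sentence deriving the $r=1$ case ``from the characteristic-zero vanishing, which spreads out'' does not by itself produce one open set. A characteristic-zero vanishing spreads out only for a fixed twist, not for a twist involving $p=\mathrm{char}\,k(s)$; the uniform Serre vanishing above is what makes the argument work.
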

In particular, by (\ref{eq:GRMD}) and (\ref{eq:tauMD})  we get that for $s \in S$ and all closed points $\m \in \Spec X_s$:
\begin{equation} \label{eq:0FstarEqualRed}
0^{\rm GR}_{H^d_\m(\cO_{X_s})} = 0^*_{H^d_\m(\cO_{X_s})}. 
\end{equation}

\subsection{$F$-nilpotent singularities} \label{ss:Fnilp}
In this subsection we discuss $F$-nilpotent and weakly $F$-nilpotent singularities which are the main focus of this article. 
\begin{defi} \label{defi:Fnilp} Let $(R,\frm)$ be a local $d$-equidimensional reduced Noetherian $F$-finite ring of characteristic $p>0$. We say that $R$ is 
\begin{enumerate}
    \item \emph{weakly $F$-nilpotent} if  $H^i_\frm(R)$ is Frobenius nilpotent for every $0 \leq i < d$, and 
    \item \emph{$F$-nilpotent} if it is weakly $F$-nilpotent and $0^*_{H^d_\m(R)}$ is Frobenius nilpotent.
\end{enumerate}
We say that a positive characteristic reduced Noetherian locally equidimensional $F$-finite ring $R$ is \emph{$F$-nilpotent} (resp.\ \emph{weakly $F$-nilpotent}) if it is so after localisation at every maximal ideal.
\end{defi}

\begin{rmk} \label{rmk-PerfCM} It is immediate from the definition that if $(R,\frm)$ is a Cohen--Macaulay local ring, then $R$ is weakly $F$-nilpotent. A similar implication holds in characteristic $0$, but it is not so immediately clear; see (\ref{eq:DDBCM}).    
\end{rmk}

By definition, we get that
\[
\text{ $R$ is $F$-rational } \iff \text{$R$ is $F$-injective and $F$-nilpotent.}
\]

For the convenience of the reader, we review the following standard result.

\begin{prop} \label{prop:EquivFNilpDef}
Let $(R,\m)$ be a local $d$-equidimensional reduced $F$-finite Noetherian ring of characteristic $p>0$. Assume that $R$ is weakly $F$-nilpotent. Then the following statements are equivalent:
\begin{enumerate} \setlength\itemsep{0.2em}
\item $R$ is $F$-nilpotent,
\item $0^F_{H^d_\m(R)} = 0^*_{H^d_\m(R)}$,
\item $\sigma(\omega_R) = \tau(\omega_R)$.
\item every Frobenius stable $R$-submodule $M$ of $H^{d}_{\frm}(R)$ which does not map surjectively onto any $H^d_\frm(R/\mathfrak{q}_i)$ for minimal primes $\mathfrak{q}_1, \ldots, \mathfrak{q}_m$ of $R$, is also Frobenius nilpotent.
\end{enumerate}
\end{prop}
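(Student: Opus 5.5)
We prove (1)$\Leftrightarrow$(2), then (2)$\Leftrightarrow$(3), then (2)$\Leftrightarrow$(4). Weak $F$-nilpotence plays no role; only the top local cohomology $H^d_\m(R)$ matters. Throughout we use that $H^d_\m(R)$ is Artinian, so every ascending union of Frobenius-stable submodules behaves well.

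\textbf{(1)$\Leftrightarrow$(2).} By definition $0^F_{H^d_\m(R)} \subseteq 0^*_{H^d_\m(R)}$, and $0^F_{H^d_\m(R)}$ consists exactly of the Frobenius nilpotent elements of $H^d_\m(R)$. Hence (2) says that every element of $0^*_{H^d_\m(R)}$ is killed by some $F^e$. This is precisely the statement that $0^*_{H^d_\m(R)}$ is Frobenius nilpotent in the elementwise sense of the footnote, which is (1). If one wants a uniform exponent, Hartshorne--Speiser--Lyubeznik (cf.\ Remark~\ref{rmk:stabilisation}) provides a uniform $e$ killing $0^F_{H^d_\m(R)}$, but this is not needed.

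\textbf{(2)$\Leftrightarrow$(3).} We apply Matlis duality through \eqref{eq:tauMD}. The inclusion $0^F \subseteq 0^*$ gives a surjection
\[
H^d_\m(R)/0^F_{H^d_\m(R)} \twoheadrightarrow H^d_\m(R)/0^*_{H^d_\m(R)},
\]
which dualises to the inclusion $\tau(\omega_R)^\wedge \subseteq \sigma(\omega_R)^\wedge$. Thus $0^F = 0^*$ holds if and only if this inclusion is an equality. Since completion is faithfully flat over the local ring $R$ and both modules are finitely generated, equality after completion is equivalent to $\tau(\omega_R)=\sigma(\omega_R)$. Here we use Remark~\ref{rmk:stabilisation} to know that the formation of $\sigma$ and $\tau$ commutes with completion, so that the completed modules are indeed $\sigma(\omega_R)^\wedge$ and $\tau(\omega_R)^\wedge$.

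\textbf{(2)$\Leftrightarrow$(4).} By Remark~\ref{rmk:analyticallyIrred}, $0^*_{H^d_\m(R)}$ is the largest Frobenius-stable submodule of $H^d_\m(R)$ that does not surject onto any $H^d_\m(R/\mathfrak q_i)$.
\begin{itemize}
\item For (4)$\Rightarrow$(2), apply (4) to $M=0^*_{H^d_\m(R)}$. This shows $M$ is nilpotent, so $M\subseteq 0^F$, and combined with $0^F \subseteq 0^*$ we get equality.
\item For (2)$\Rightarrow$(4), any $M$ as in (4) lies in $0^*_{H^d_\m(R)}$ by maximality. By (2), $0^*_{H^d_\m(R)} = 0^F_{H^d_\m(R)}$, which is nilpotent, so $M$ is nilpotent.
\end{itemize}

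\textbf{Main obstacle.} The step that needs care is the characterisation of $0^*_{H^d_\m(R)}$ in Remark~\ref{rmk:analyticallyIrred} in the reduced, non-domain setting. One must check that $0^F$ itself satisfies the non-surjectivity condition, which is consistent with it being contained in $0^*$. One must also check that the condition ``does not surject onto any $H^d_\m(R/\mathfrak q_i)$'' matches the dual condition of being ``nonzero on each component'' under Matlis duality. We take both facts from Remark~\ref{rmk:analyticallyIrred}, as stated earlier in the paper.
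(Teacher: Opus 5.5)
Your proposal is correct and matches the paper's (very brief) proof step for step: (1)$\iff$(2) is definitional because $0^F_{H^d_\m(R)}\subseteq 0^*_{H^d_\m(R)}$ is exactly the set of Frobenius-nilpotent elements, (2)$\iff$(3) is Matlis duality via \eqref{eq:tauMD} together with faithful flatness of completion, and (2)$\iff$(4) is the maximality characterisation of $0^*_{H^d_\m(R)}$ in Remark~\ref{rmk:analyticallyIrred}. Your appeal to the compatibility of $\sigma$ and $\tau$ with completion is harmless but unnecessary, since \eqref{eq:tauMD} is already stated for the completions $\sigma(\omega_R)^\wedge$ and $\tau(\omega_R)^\wedge$ of the modules themselves.
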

\noindent In particular, Definition \ref{defi:Fnilp} agrees with Definition \ref{defi:FnilpIntro} from the introduction.
\begin{proof}
The equivalence $(1) \iff (2)$ is definitional and $(2) \iff (3)$ follows from (\ref{eq:tauMD}). The equivalence with (4) is immediate by Remark \ref{rmk:analyticallyIrred}.
\end{proof}

We conclude with a few remarks which, although not used in this paper, shed further light on the meaning of $F$-nilpotence.

\begin{rmk} \label{rmk:FinjPerf}
It is easy to see that $R$ is $F$-injective if and only if the natural map
\[
H^i_\frm(R) \to H^i_\frm(R_{\rm perf})
\]
is injective for every $i \in \Z$. Since $R_{\rm perf}$ acts as the positive characteristic analogue of $\DDB^0_R$, this gives a direct comparison between the definition of $F$-injectivity and Du Bois singularities as in Remark \ref{rmk:DuBoisLC}. Similarly -- though this time the result is a deep theorem related to $+$-closure -- one can check that a domain $R$ is $F$-rational if and only if
\[
H^i_\m(R) \to H^i_\m(R^+)
\]
is injective for every $i \in \Z$ (cf.\ \cites{Smith94, HH92}), where $R^+$ is the absolute integral closure of $R$, namely $R^+ = \bigcup_{R \subseteq S} S$ where the union is taken over all finite extensions $R \subseteq S$ contained within a fixed algebraic closure $\overline{{\rm Frac}(R)}$ of the fraction field of $R$. Intuitively, finite covers serve as replacements for birational maps in positive characteristic; in this vein, $R^+$ is the positive characteristic analogue of $Rf_*\cO_Y$ from Definition \ref{def:DuBoisRational} (see also Remark \ref{rmk:DuBoisLC}).

In the spirit of the above paragraph, the definition of $F$-nilpotence from Definition \ref{defi:Fnilp} can be checked to be equivalent to the injectivity of
\[
H^i_\m(R_{\rm perf}) \to H^i_\m(R^+)
\]
for every $i \in \Z$. This further emphasizes that $F$-nilpotence measures the difference between $F$-injectivity and $F$-rationality.
\end{rmk}

\begin{rmk} \label{rmk:BB} The notion of weak $F$-nilpotence is related to Hartshorne-Speiser's \emph{$F$-depth} \cite{HartshorneSpeiser}. Indeed, \cite{BlickleBondu} shows that weak $F$-nilpotence is equivalent, when $R = S/I$ is a quotient of a regular local ring $S$ by an ideal $I$, to the vanishing $\cH^j_I(S) = 0$ for $j > {\rm ht}(I)$. By \cite{HartshorneSpeiser}*{Thm. 2.5(b)}, this is equivalent to $R$ having maximal $F$-depth, equal to $\dim R$.

In the same work, Blickle--Bondu defined a notion of \emph{close to $F$-rational singularities}, which is equivalent to $F$-nilpotent singularities. If $R = S/I$ where $S$ is a regular local ring, then $R$ has close to $F$-rational singularities if and only if the composition
\[ \cL(S,I) \to \cH^{{\rm ht}(I)}_I(S) \to R\Gamma_I(S)[{\rm ht}(I)]\]is a quasi-isomorphism, where $\cL(S,I)$ is Blickle's intersection homology $F$-module. This characterization matches the characteristic $0$ version in Remark \ref{rmk:cHRHHodgeWeight} below.

The above result can also be formulated intrinsically on $X := \Spec R$. Namely, it is shown in \cite{BBLSZ} that a $d$-dimensional Noetherian domain $R$ is weakly $F$-nilpotent if and only if $\mathbb{F}_p[d]$ is perverse as an \'etale constructible $\mathbb{F}_p$-sheaf. Moreover, $R$ is $F$-nilpotent if and only if $\mathbb{F}_p[d]$ coincides with the intersection complex.
\end{rmk}

Let  $X$ be a variety defined over a field of characteristic zero. The key property of $\DDB^i_X$ that we shall use -- and indeed, a defining one -- is the following. Given a blow-up square,
\begin{equation} \label{eq:blow-up-squareE}
\begin{tikzcd}
E \ar[hook]{r} \ar{d} & Y \ar{d}{f} \\
Z \ar[hook]{r} & X.
\end{tikzcd}
\end{equation}
namely, $f \colon Y \to X$ is a projective birational map which is an isomorphism over the complement of the closed subvariety $Z \hookrightarrow X$ with $E = f^{-1}(Z)^{\rm red}$, we obtain a homotopy pullback square: 
\begin{equation} \label{eq:HPS}
\begin{tikzcd}
Rf_*\DDB^{i}_{E}   & \ar{l} Rf_*\DDB^{i}_{Y}  \\
\DDB^{i}_{Z}  \ar{u} & \ar{l}  \ar{u}[swap]{f^*} \DDB^{i}_X.
\end{tikzcd}
\end{equation}
In particular, there exists an exact triangle:
\[
\DDB^{i}_X \to Rf_*\DDB^{i}_{Y} \oplus \DDB^{i}_{Z} \to  Rf_*\DDB^{i}_{E} \xrightarrow{+1}.
\]
Further, when $E$ is simple normal crossing, we have that $\DDB^i_E \cong \Omega^i_E/ {\rm tor}$ (\cite{DuBois}*{Example 4.7}). 

\begin{rmk} \label{rmk:SchwedeFormula}
Given an affine variety $X$ over a field $k$ of characteristic zero together with an embedding $u \colon X \hookrightarrow W$ into a smooth variety $W$ we can use the above remark to obtain an easy formula for $\DDB^0_X$ due to Saito and Schwede \cite{SchwedeDB}. Namely, let $f \colon Y \to W$ be a log resolution of $(W,X)$ which is an isomorphism over $W \setminus X$ with $E := f^{-1}(X)^{\rm red}$. Then we get a homotopy pullback square:
\begin{equation*}
\begin{tikzcd}
Rf_*\cO_{E}   & \ar{l} Rf_*\cO_{Y}  \\
\DDB^{0}_{X}  \ar{u}{f^*} & \ar{l}  \ar{u}[swap]{f^*} \cO_W.
\end{tikzcd}
\end{equation*}
Since $f^* \colon \cO_W \to Rf_*\cO_Y$ is an isomorphism, we thus get that the natural map
\begin{equation} \label{eq:Schwede}
f^* \colon \DDB^{0}_{X} \to Rf_*\cO_E \text{ is an isomorphism.}
\end{equation}
\end{rmk}
\vspace{1em}
Next, we move to the case when $X$ is a variety defined over a field of positive characteristic. In this case, it turns out that by a result of Gabber the derived sheafification of the functor $\Omega^0$ on the $h$-site of $X$ yields nothing but the perfection $\cO_{X, {\rm perf}}$ (see \cite[Theorem 3.3]{BST}). In particular, given a blow-up square as in (\ref{eq:blow-up-squareE}), we get the induced homotopy pullback square:
\begin{equation} \label{eq:HPSPerf}
\begin{tikzcd}
Rf_*\cO_{E, {\rm perf}}   & \ar{l} Rf_*\cO_{Y, {\rm perf}}   \\
\cO_{Z, {\rm perf}}   \ar{u} & \ar{l}  \ar{u}[swap]{f^*} \cO_{X, {\rm perf}} ,
\end{tikzcd}
\end{equation}
and so the exact triangle:
\begin{equation} \label{eq:exact-triangle-Eperf}
\cO_{X, {\rm perf}}  \to Rf_*\cO_{Y, {\rm perf}}  \oplus \cO_{Z, {\rm perf}}  \to  Rf_*\cO_{E, {\rm perf}}  \xrightarrow{+1}.
\end{equation}
\begin{rmk}[Embedded resolution]
With the same assumptions and notation as in Remark \ref{rmk:SchwedeFormula} but with $k$ replaced by a characteristic $p>0$ field we immediately get that:
\begin{equation} \label{eq:SchwedeCharP}
f^* \colon \cO_{X,{\rm perf}} \to Rf_*\cO_{E, {\rm perf}} \text{ is an isomorphism.}
\end{equation}
\end{rmk}

\subsection{Hodge rational homology level} \label{ss:HRHl}
To place the conditions from Conjecture \ref{conj:mainintro} in a broader context, we review various Hodge theoretic conditions from the viewpoint of higher singularities. The content of this subsection will not be used in the proofs of the main theorems.

Let $X$ be a reduced equidimensional connected variety over $\C$. As all notions we consider in this subsection will be local, we may assume that $X$ is affine and pick an embedding $i \colon X \hookrightarrow W$ into a smooth ambient variety $W$. We refer to \cite{SchnellMHM} for an overview of the theory of mixed Hodge modules on a smooth variety (see also \cite[Section 2.4]{KW}). By a mixed Hodge module on $X$, we shall mean a mixed Hodge module on $W$ which is supported on $X$. We denote by ${\rm IC}^H_X$ the Hodge module intersection complex. Its underlying complex of constructible sheaves is the intersection complex ${\rm IC}_X \in D^b_{\rm cons}(X, \Q)$.

In what follows we shall utilize the functor
\[
{\rm Gr}^F_\bullet {\rm DR} \colon D^b_{\rm MHM}(X) \to D^b_{\rm coh, gr}(X)
\]
from the derived category of mixed Hodge modules on $X$ to the graded derived category of coherent sheaves on $X$. We refer to \cite[Proposition 2.17]{KW} for a recap of its properties.

It turns out that for a $d$-equidimensional reduced variety $X$ over $\C$, one has that 
\[
\DDB^i_X \cong {\rm Gr}^F_{-i} {\rm DR}(\Q^H_X[d])[-d+i].
\]
Building on \cite{KebekusSchnell}, Popa and Park introduced the following definition.
\begin{defi}[{\cite[Definition 3.3]{PPLefschetz}}]
  Let $X$ be an equidimensional connected variety of dimension $d$ over $\C$. We define the \emph{$i$th intersection Du Bois complex} by the formula:
  \[
  \IO^i_X := {\rm Gr}^F_{-i}{\rm DR}(\IC^H_X)[-d+i].
  \]
\end{defi}
By applying ${\rm Gr}^F_{\bullet}{\rm DR}(-)$ to the natural map $\Q^H_X[d] \to \IC^H_X$ we get induced morphisms
\begin{equation} \label{eq:QToIC}
\DDB^i_X \to \IO^i_X.
\end{equation}
\begin{rmk}

We say a few words on why (\ref{eq:QToIC}) agrees with map (\ref{eq:keymap}) from the introduction. Given a resolution of singularities $f\colon Y \to X$, the pullback morphism $\Q_X^H[d] \to f_* \Q_Y^H[d]$ factors through the natural map to ${\rm IC}_X^H$. By \cite{SaitoMHC}*{Proof of Theorem 5.4}, if we apply ${\rm Gr}^F_0 {\rm DR}(-)$ to this composition
\[
f^* \colon \Q_X^H[d] \to {\rm IC}_X^H \to f_* \Q_Y^H[d]
\]
we get
\[ \underline{\Omega}_X^0 \to \IO^0_X \xrightarrow{\cong} Rf_*\cO_Y,\]
agreeing with the natural morphism $f^*$ from (\ref{eq:keymap}). Here, the rightmost map is an isomorphism by \cite{KebekusSchnell}*{Proposition 8.2}.
\end{rmk}

\begin{rmk} Let $X$ be a normal connected variety over $\C$. Following \cite{SVV}, we say that $X$ is \emph{pre-$m$-Du Bois} if and only if $\DDB^i_X \cong \cH^0(\DDB^i_X)$ for every $i \leq m$. Moreover, one says that $X$ is \emph{pre-$m$-rational} if and only if $\IO^i_X \cong \cH^0(\IO^i_X)$ for every $i \leq m$. Note that this is not the original definition as stated in \cite{SVV}, but it is equivalent thereto by \cites{ParkPopa,PSV,DOR1} (see \cite[Corollary 4.10]{KW}). The notion of Hodge rational homology level defined below will measure the difference between pre-$m$-Du Bois singularities and pre-$m$-rational singularities. We refer to \cites{MOPW,JKSY,MPLocCoh,FL,CDM, DOR1,CDOIsolated,SVV,PSV,PPFactorial,KW} for more detailed discussion on higher singularities. 
\end{rmk}

We say that a variety $X$ over $\C$ is a \emph{rational homology manifold} if $\IC_X \cong \Q_X[\dim X]$. In view of the following morphisms induced by Poincar\'{e} duality:
\begin{equation} \label{eq-PD} \Q_X[\dim X] \to {\rm IC}_X \to \mathbf D_X(\Q_X[\dim X]),\end{equation}
one is naturally led to a weakening of the rational homology manifold condition in terms of the Hodge filtration. This gives rise to the following definition, whose properties were elucidated in \cites{ParkPopa,DOR1}.

\begin{defi} Let $X$ be an equidimensional variety over $\C$. Then we say that the \emph{Hodge rational homology level} of $X$ is at least $k$, denoted ${\rm HRH}(X) \geq k$,
if and only if the maps 
$\underline{\Omega}_X^i \to \IO_X^i$ (see \eqref{eq:QToIC}) 
are quasi-isomorphisms for all $i\leq k$.

We have ${\rm HRH}(X) = + \infty$ if and only if $X$ is a rational homology manifold.
\end{defi}

The invariant ${\rm HRH}(X)$ has been studied in various examples \cites{DOR1,DOR2,CDOIsolated,ORS}: hypersurfaces, LCI varieties, cones over rational homology manifolds, determinantal varieties and secant varieties. Moreover, the condition ${\rm HRH}(X) \geq 0$ is precisely the difference between Du Bois singularities and rational singularities.

The following is a more subtle invariant, measuring the Hodge-theoretic failure of $X$ to behave like a local complete intersection variety from a local cohomological point of view.

\begin{defi}[{\cite{CDOIsolated}}] Let $X$ be an equidimensional variety over $\C$. Define
\[ c(X) := \sup \{k \geq -1 \mid {\rm depth}(\underline{\Omega}_X^i) \geq \dim X - i \text{ for all } 0 \leq i \leq k\}.\]
This depth condition is equivalent to the vanishing $H^j_x(\DDB_X^i) = 0$ for all closed points $x \in X$ and all   $i,j$ such that $0 \leq i \leq k$ and $i+j < \dim X$.
\end{defi}

We make a few observations.
\begin{enumerate}
\setlength\itemsep{0.3em}
    \item We have that $c(X) \geq 0$ if and only if $\underline{\Omega}_X^0$ is Cohen--Macaulay. Note that this condition is automatically satisfied when $X$ is Cohen--Macaulay thanks to (\ref{eq:DDBCM}).
    \item We have $c(X) = \infty$ if and only if $\Q_X[\dim X]$ is a perverse sheaf, as discovered in \cite{MPLocCoh}*{Corollary 12.6};  see also \cite{PSV}*{Theorem 4.1}.
    \item Since ${\rm depth}(\IO^i_X) \geq \dim X - i$ for all $i \geq 0$ (cf.\  \cite[Lemma 2.31]{KW}), we get
\[ c(X) \geq {\rm HRH}(X).\]
\end{enumerate}

\begin{rmk} \label{rmk:cHRHHodgeWeight}
The invariants ${\rm HRH}(X)$ and $c(X)$ can be expressed in terms of Hodge and weight filtrations as proven in \cite{DOR1}*{Theorem D(1)} and \cite{CDOIsolated}*{Theorem A}. Namely, let $X \subseteq Y$ be a closed embedding into a smooth variety $Y$ with $c = \dim Y - \dim X$. Then
\begin{align*} c(X) &= \sup \{k \mid F_k \cH^{c+j}_X(\cO_Y) = 0 \text{ for all } j > 0\}\\[0.3em]
{\rm HRH}(X) &= \min\left(c(X), \sup \{k \mid F_k W_{\dim Y +c}\cH^{c}_X(\cO_Y) = F_k \cH^c_X(\cO_Y)\}\right).
\end{align*}
Note that the underlying $\cD_Y$-module of $W_{\dim Y +c} \cH^c_X(\cO_Y)$ is the intersection homology $\cD$-module of \cite{BrylinskiKashiwara} (see \cite{SaitoMHM}*{(4.5.9)}). In this way, we see that the ${\rm HRH}(X)$ invariant is strongly related to Blickle--Bondu's condition from Remark \ref{rmk:BB}.
\end{rmk}

We can now rephrase Conjecture \ref{conj:mainintro} in the spirit of Remark \ref{rmkInterpretation} as follows:
{\setlength{\leftmargini}{2em} \begin{enumerate}
\setlength\itemsep{0.1em}
\item $X$ is of open $F$-nilpotent type if and only if ${\rm HRH}(X) \geq 0$.
\item $X$ is of open weakly $F$-nilpotent type if and only if $c(X) \geq 0$. 
\end{enumerate}}
This reformulation naturally raises the question of whether there exist positive characteristic analogues of the conditions ${\rm HRH}(X) \geq k$ and $c(X) \geq k$. We will address this in a subsequent article.

\subsection{Equivalence of ${\rm HRH}(X) \geq 0$ with the condition from \cite{Srinivas-Takagi}}

We now relate the condition ${\rm HRH}(X) \geq 0$, in the case of isolated normal singularities, to the Hodge theoretic condition in \cite{Srinivas-Takagi}, which proves that our Theorem \ref{thm-main} is a generalization of that in \emph{loc.\ cit.} to the non-isolated singularities case.

\begin{prop} Let $X$ be a normal $d$-dimensional variety over $\C$ with an isolated singularity at $x\in X$ and let $\pi \colon (\widetilde{X},E) \to (X,x)$ be a strong log resolution of singularities, with $E$ a simple normal crossings divisor. Then the following statements are equivalent:
\begin{enumerate}
\setlength\itemsep{0.2em}
    \item ${\rm HRH}(X) \geq 0$,
    \item ${\rm Gr}_F^0 H^i_{\{x\}}(X,\C) = 0$ for all $i\in \Z$,
    \item $H^i(E,\cO_E) = 0$ for all $i\geq 1$.
\end{enumerate}
\end{prop}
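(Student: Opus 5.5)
We will route both equivalences through the comparison map $f^*\colon \DDB^0_X \to R\pi_*\cO_{\widetilde X}$ and the blow-up square (\ref{eq:HPS}) for $\pi$, with $Z=\{x\}$. Since ${\rm HRH}(X)\geq 0$ only involves $i=0$, condition (1) says exactly that $\pi^*\colon \DDB^0_X \to R\pi_*\cO_{\widetilde X}$ is an isomorphism (using the identification of (\ref{eq:QToIC}) with (\ref{eq:keymap}) and $\IO^0_X\cong R\pi_*\cO_{\widetilde X}$). The triangle attached to (\ref{eq:HPS}) for $i=0$ reads
\[
\DDB^0_X \to R\pi_*\cO_{\widetilde X}\oplus \C_x \to R\pi_*\cO_E \xrightarrow{+1},
\]
using $\DDB^0_{\{x\}}=\C$ and $\DDB^0_E=\cO_E$ for the snc divisor $E$. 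The cone of $\pi^*$ is therefore isomorphic to the cone of $\C_x\to R\pi_*\cO_E$, up to shift. Because $E$ is proper and connected (Zariski's main theorem, as $X$ is normal), this map is an isomorphism on $\cH^0$. Hence $\pi^*$ is an isomorphism if and only if $H^i(E,\cO_E)=R^i\pi_*\cO_E=0$ for all $i\geq1$. This gives (1)$\iff$(3), and this step is routine.

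For (2)$\iff$(3) we use the mixed Hodge theory of the link. We plan to use the standard long exact sequence
\[
\cdots\to H^i_{\{x\}}(X)\to H^i_E(\widetilde X)\to H^i(E)\to H^{i+1}_{\{x\}}(X)\to\cdots,
\]
which is compatible with mixed Hodge structures. It comes from $H^i_{\{x\}}(X)\cong H^i(U,U\setminus x)$ for a contractible neighbourhood $U$, together with $H^i(\pi^{-1}U)\cong H^i(E)$ and excision $H^i(U\setminus x)\cong H^i(\pi^{-1}U\setminus E)$. Here $H^i_E(\widetilde X)\cong H_{2d-i}(E)(-d)$ by duality on the smooth $\widetilde X$, so its Hodge filtration satisfies ${\rm Gr}^0_F H^i_E(\widetilde X)=0$: its Hodge numbers have $p\geq 1$, being a $(-d)$-Tate twist of the dual of a structure with $p\le d-1$ (indeed $p\leq \dim E=d-1$). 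Since ${\rm Gr}^0_F$ is exact on mixed Hodge structures, we get ${\rm Gr}^0_F H^i(E)\cong {\rm Gr}^0_F H^{i+1}_{\{x\}}(X)$ for all $i$.

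Finally, for the snc variety $E$ one has ${\rm Gr}^0_F H^i(E,\C)=H^i(E,\DDB^0_E)=H^i(E,\cO_E)$ by the Du Bois degeneration. In degree $0$, $E$ is connected, so $H^0(E)=\C$ is absorbed by $H^0_{\{x\}}(X)=0$ and $H^1_{\{x\}}(X)$, consistently with the Gr computation. Hence ${\rm Gr}^0_F H^{i+1}_{\{x\}}(X,\C)\cong H^i(E,\cO_E)$ for $i\ge1$, and the low degrees $H^0_{\{x\}}$ and $H^1_{\{x\}}$ vanish or have no ${\rm Gr}^0_F$. This yields (2)$\iff$(3).

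The main obstacle is the careful Hodge-theoretic bookkeeping in the last two paragraphs. We must check that the sequence is a sequence of mixed Hodge structures, and that ${\rm Gr}^0_F H^*_E(\widetilde X)=0$, including in degrees $i\le 2$ and including that $H^1_{\{x\}}(X)$ has ${\rm Gr}_F^0=0$. For these we would cite Steenbrink's description of the MHS on local cohomology of isolated singularities rather than rederive it.
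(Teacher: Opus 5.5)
Your argument is correct in substance, and it takes a different route from the paper's. The paper proves (1)$\iff$(2) by invoking Theorem D(2) of \cite{DOR1}, which characterizes ${\rm HRH}(X)\geq 0$ through $F^d H^i_{\{x\}}(X)$. It then converts ${\rm Gr}^d_F$ into ${\rm Gr}^0_F$ using three inputs: the Durfee--Saito identification of local cohomology with link cohomology, the formula $\ell^{p,q}=\dim H^q(E,\Omega^p_{\widetilde X}(\log E)|_E)$, and Serre duality $\ell^{p,q}=\ell^{d-p,d-q-1}$. Finally, (2)$\iff$(3) is the case $p=0$ of the link formula.

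You instead get (1)$\iff$(3) directly. With the identification $\IO^0_X\cong R\pi_*\cO_{\widetilde X}$ recorded in the paper, the blow-up square \eqref{eq:HPS} for $Z=\{x\}$ reduces everything to whether $\C_x\to R\pi_*\cO_E$ is an isomorphism, which needs only the connectedness of $E$. For (2)$\iff$(3) you effectively reprove $\ell^{0,q}=h^q(\cO_E)$, using the local-cohomology sequence, the duality $H^i_E(\widetilde X)\cong H^{2d-i}(E)^\vee(-d)$ (all of whose Hodge numbers have $p\geq 1$), exactness of ${\rm Gr}^0_F$, and Du Bois degeneration for the snc divisor $E$. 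Your route is more self-contained and avoids \cite{DOR1} and Serre duality entirely. The paper's route has the advantage of exhibiting the ${\rm Gr}^0_F\leftrightarrow{\rm Gr}^d_F$ symmetry, which explains why the criterion of \cite{DOR1} and the Srinivas--Takagi criterion agree.

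There is one slip to fix, and it sits exactly in the low degrees you flagged: your long exact sequence is missing a term. You can recover the correct sequence in two ways. One is to run the Barratt--Whitehead argument on your ladder of pairs, where $H^0(U)=\C$ does not vanish. The other is to apply $i_x^!$ to the triangle $\Q^H_X\to \pi_*\Q^H_{\widetilde X}\oplus \Q^H_{\{x\}}\to \pi_*\Q^H_E\xrightarrow{+1}$ in $D^b{\rm MHM}(X)$, which also settles compatibility with mixed Hodge structures. Either way you get
\[
\cdots\to H^i_{\{x\}}(X)\to H^i_E(\widetilde X)\oplus H^i(\{x\})\to H^i(E)\to H^{i+1}_{\{x\}}(X)\to\cdots.
\]

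In your version, ${\rm Gr}^0_F H^0(E)=\C$ would inject into ${\rm Gr}^0_F H^1_{\{x\}}(X)$, so (2) could never hold. Your sentence that $H^0(E)$ is ``absorbed by'' $H^1_{\{x\}}(X)$ is exactly this error, and your isomorphism ${\rm Gr}^0_F H^i(E)\cong {\rm Gr}^0_F H^{i+1}_{\{x\}}(X)$ is false for $i=0$.

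With the extra term, connectedness of $E$ makes $H^0(\{x\})\to H^0(E)$ an isomorphism. Hence $H^1_{\{x\}}(X)\hookrightarrow H^1_E(\widetilde X)\cong H_{2d-1}(E)(-d)=0$, since $\dim_{\mathbf R}E=2d-2$ and normality forces $d\geq 2$. For $i\geq 1$ the extra term vanishes, and your argument goes through unchanged.
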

\begin{proof} Recall that \cite{DOR1}*{Theorem D(2)} shows that ${\rm HRH}(X) \geq 0$ if and only if
\[ F^{d} H^i_{\{x\}}(X,\C) = \begin{cases} 0 & i < 2d \\ \C & i = 2 d\end{cases}.\]

As $X$ is irreducible at $x$, we know $H^{2d}_{\{x\}}(X)$ is one dimensional \cite{Brion}*{Proposition A1}, so we see that ${\rm HRH}(X) \geq 0$ if and only if ${\rm Gr}_F^d H^i_{\{x\}}(X,\C) = 0$ for all $i  < 2d$.

To prove the claim we will use the link of the singularity and Serre duality on the resolution. 

Let $L_x$ be the link at $x$: by taking an analytic local embedding $X\subseteq \C^N$, we let $S_{\varepsilon}$ be the $(2N-1)$-dimensional sphere centered at $x$ of radius $0 < \varepsilon \ll 1$, and then set $L_x := X \cap S_{\varepsilon}$. The singular cohomology $H^j(L_x)$ carries a mixed Hodge structure, which, by \cite{DurfeeSaito}*{Proposition 3.5} satisfies
\[ H^k(L_x) \cong H^{k+1}_{\{x\}}(X) \text{ as mixed Hodge structures, for all } k > 0,\]
and we have a short exact sequence
\[ 0 \to \Q^H \to H^0(L_x) \to H^1_{\{x\}}(X) \to 0.\]

Since we assume $X$ is normal, hence $S_2$, any punctured neighborhood $X\setminus \{x\}$ is connected, and so the link is connected. Thus, this short exact sequence gives vanishing $H^1_{\{x\}}(X) = 0$. 

If we now define the link invariants 
\[
\ell^{p,q} = \dim_\C {\rm Gr}_F^p H^{p+q}(L_x) \overset{(\star)}{=} \dim_\C {\rm Gr}_F^p H^{p+q+1}_{\{x\}}(X),
\]
where $(\star)$ holds when $p+q>0$, then we have equality \cite{FL3}*{Definition 2.7}:
\[ \ell^{p,q} = \dim_{\C} H^q(E,\Omega_Y^p(\log E)\vert_E),\]
and so Serre duality:
\begin{align*}
H^q(E,\Omega_Y^p(\log E)|_E)^* &\cong H^{d-q-1}(E, \Hom_E\left(\Omega_Y^p(\log E)|_E, \omega_E\right)) \\
&\cong H^{d-q-1}(E, \Hom_Y\left(\Omega_Y^p(\log E), \omega_Y(E)\right)\!|_E)\\
&\cong  H^{d-q-1}(E, \Omega_Y^{d-p}(\log E)|_E)
\end{align*}
gives $\ell^{p,q} = \ell^{d-p,d-q-1}$. Putting this together, we get an equality for all $p+q \geq 1$:
\[ \dim_\C {\rm Gr}_F^{p} H^{p+q+1}_{\{x\}}(X) = \ell^{p, q} = \ell^{d-p, d-q-1} =  \dim_\C {\rm Gr}_F^{d -p} H^{2d -p-q}_{\{x\}}(X),\]
which, by taking $p=0$, gives equality for all $q\geq 1$:
\begin{equation} \label{eq-SerreDualLocCoh} \dim_\C {\rm Gr}_F^{0} H^{q+1}_{\{x\}}(X) = \ell^{0,q} = \ell^{d,d-q-1} = \dim_\C {\rm Gr}_F^{d} H^{2d -q}_{\{x\}}(X).\end{equation}

We can now prove the claimed equivalences. We have already argued that
\[ {\rm HRH}(X) \geq 0 \quad \text{ if and only if } \quad \dim_{\C} {\rm Gr}_F^{d} H^i_{\{x\}}(X,\C) = 0\ \text{ for all } i < 2d,\]
which, by Serre duality \eqref{eq-SerreDualLocCoh} is equivalent to the vanishing
\[\dim_{\C} {\rm Gr}_F^{0} H^i_{\{x\}}(X,\C) = 0\quad \text{ for all }\ i \geq 2.\]

Finally, by comparison with the link invariants, this condition is equivalent to the vanishing $\ell^{0,q} = 0$ for all $q\geq 1$, in other words, to the vanishing
\[ H^q(E,\cO_E) = 0\quad \text{ for all }\ q \geq 1,\]
as claimed.
\end{proof}

\section{Proof of the main theorem}
Before proceeding with the proof, we state and show the following key lemma.

\begin{lem} \label{lem:RationalPerfDefofFNilpt}
Let $X$ be an equidimensional  variety over an $F$-finite field $k$ of positive characteristic $p>0$. Assume that $X$ admits a resolution of singularities $f \colon Y \to X$ which satisfies Grauert-Riemenschneider vanishing:
\begin{equation} \label{eq:GRinLemma}
R^if_*\omega_Y = 0
\end{equation}
for all integers $i>0$.  Moreover assume that $\tau(\omega_X) = \cJ(\omega_X)$. Then $X$ is $F$-nilpotent if and only if the map
\begin{equation} \label{eq:injPerfResInStatement}
H^{i}_x(\cO_{X, {\rm perf}}) \to H^{i}_x(Rf_*\cO_{Y, {\rm perf}})
\end{equation}
is injective for every closed point $x \in X$ and all integers $i \in \Z$.
\end{lem}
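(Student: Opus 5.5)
We work locally at a closed point $x\in X$ with $R=\cO_{X,x}$ of dimension $d$. Two inputs do most of the work. First, Grauert--Riemenschneider vanishing (\ref{eq:GRinLemma}) together with local duality gives that $Rf_*\cO_Y$ is Cohen--Macaulay, so $H^i_x(Rf_*\cO_Y)=0$ for $i<d$. Second, local cohomology commutes with the filtered colimit defining perfection, so $H^i_x(\cO_{X,{\rm perf}})=\varinjlim_F H^i_x(\cO_X)$ and $H^i_x(Rf_*\cO_{Y,{\rm perf}})=\varinjlim_F H^i_x(Rf_*\cO_Y)$. We then split the statement into the degrees $i<d$ and the degree $i=d$.

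\emph{Degrees $i<d$.} By Cohen--Macaulayness, $H^i_x(Rf_*\cO_{Y,{\rm perf}})=0$ for $i<d$. Hence (\ref{eq:injPerfResInStatement}) is injective for $i<d$ exactly when $H^i_x(\cO_{X,{\rm perf}})=\varinjlim_F H^i_x(R)=0$. This vanishing says that every class of $H^i_\m(R)$ is killed by some Frobenius power, i.e.\ $H^i_\m(R)$ is Frobenius nilpotent. So injectivity in degrees $i<d$ is equivalent to weak $F$-nilpotence.

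\emph{Degree $d$.} An element of $\varinjlim_F H^d_\m(R)$ is represented by some $z\in H^d_\m(R)$ at a finite stage. Its image in $\varinjlim_F H^d_x(Rf_*\cO_Y)$ is zero iff $F^e(z)\in\ker\big(H^d_\m(R)\to H^d_\m(Rf_*\cO_Y)\big)$ for some $e$. Here we use the compatibility of $f^*$ with Frobenius: the square $R\to F^e_*R$, $Rf_*\cO_Y\to F^e_*Rf_*\cO_Y$ commutes. The kernel in question is $0^{\rm GR}_{H^d_\m(R)}$. Using $\tau(\omega_X)=\J(\omega_X)$ together with (\ref{eq:GRMD}) and (\ref{eq:tauMD}), it equals $0^*_{H^d_\m(R)}$; this is exactly the argument giving (\ref{eq:0FstarEqualRed}). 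Thus injectivity in degree $d$ holds iff every $z$ with $F^e(z)\in 0^*$ satisfies $F^{e'}(z)=0$ for some $e'$. Since $0^*$ is Frobenius stable, this is equivalent to $0^*_{H^d_\m(R)}$ being Frobenius nilpotent. One direction takes $e=0$; for the other, $F^e(z)\in 0^*$ implies $F^{e+e''}(z)=0$.

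Combining the two degrees gives exactly Definition~\ref{defi:Fnilp}. The main point needing care is the identification $0^{\rm GR}=0^*$ in the present positive-characteristic setting, which requires a resolution to compute $0^{\rm GR}$; this is provided by the hypotheses. We also need compatibility of the completion and Matlis-duality statements with localization at $x$, which is ensured by Remark~\ref{rmk:stabilisation}.
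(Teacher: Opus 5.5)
Your proposal is correct and follows the paper's proof essentially step by step. Grauert--Riemenschneider vanishing plus local duality kills $H^i_x(Rf_*\cO_{Y,{\rm perf}})$ for $i<d$, which reduces those degrees to weak $F$-nilpotence; in degree $d$, the hypothesis $\tau(\omega_X)=\cJ(\omega_X)$ gives $0^{\rm GR}_{H^d_\m(R)}=0^*_{H^d_\m(R)}$, and injectivity on perfections is then equivalent to Frobenius nilpotence of $0^*_{H^d_\m(R)}$ — your explicit colimit argument (and the remark on localizing $\tau=\cJ$) just spells out what the paper compresses into a reference to diagram (\ref{eq:perfDiagramEquiv}).
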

\noindent The injectivity of (\ref{eq:injPerfResInStatement}) is equivalent to $\mathbf{F}_p$-rationality from \cite{baudin2026frobeniusstableversiongrauertriemenschneidervanishing}.
\begin{proof}
Consider the following diagram
\begin{equation} \label{eq:perfDiagramEquiv}
\begin{tikzcd}
\cO_{X, {\rm perf}} \ar{r}{f^*} & Rf_*\cO_{Y, {\rm perf}} \\
\cO_X \ar{u} \ar{r}{f^*} & Rf_*\cO_Y \ar{u}.
\end{tikzcd}
\end{equation}
We let $(R,\m)$ be the local $d$-dimensional ring of $X$ at a closed point $x \in X$ and we replace $f \colon Y \to X$ by its localisation at $x$. By Matlis duality, Assumption (\ref{eq:GRinLemma}) implies that
\[
H^i_\m(Rf_*\cO_Y)=0
\]
for all integers $0 \leq i < d$. In particular, $H^i_\m(Rf_*\cO_{Y, {\rm perf}})=0$ for all integers $i<d$. Therefore,
\begin{align*}
\text{ $R$ is weakly $F$-nilpotent } &\iff H^i_\m(R_{\rm perf})=0 \text{ for all $0 \leq i < d$} \\
&\iff H^{i}_x(\cO_{X, {\rm perf}}) \to H^{i}_x(Rf_*\cO_{Y, {\rm perf}}) \text{ is injective for all $0 \leq i < d$.}
\end{align*}

Thus, we can assume that $R$ is weakly $F$-nilpotent and we are left to show that $R$ is $F$-nilpotent if and only if (\ref{eq:injPerfResInStatement}) is injective for $i = d$. Since $\tau(\omega_X) = \cJ(\omega_X)$, we get from (\ref{eq:GRMD}) and (\ref{eq:tauMD}) that
\begin{equation} \label{eq:GR*EqaulInEquiv}
    0^{\rm GR}_{H^d_\m(R)} = 0^{*}_{H^d_\m(R)}. 
\end{equation}
Therefore, we have the following equivalences.
\begin{align*}
\text{$R$ is $F$-nilpotent} &\overset{{\text{Def.\ } \ref{defi:Fnilp}}}{\iff} 0^*_{H^d_\m(R)} \text{ is Frobenius nilpotent} \\
&\ \overset{(\ref{eq:GR*EqaulInEquiv})}{\iff} \  0^{\rm GR}_{H^d_\m(R)} \text{ is Frobenius nilpotent} \\
&\ \overset{(\ref{eq:perfDiagramEquiv})}{\iff} \ f^* \colon H^d_\m(R_{\rm perf}) \to H^d_\m(Rf_*\cO_{Y, {\rm perf}}) \text{ is injective.}
\end{align*}
This concludes the proof.
\end{proof}

\begin{rmk}
With the same assumptions as in Lemma \ref{lem:RationalPerfDefofFNilpt} (namely, the vanishing \eqref{eq:GRinLemma} and the equality $\tau(\omega_X) = \J(\omega_X)$), suppose in addition that $x \in X$ is an isolated singularity with $X \, \backslash\, \{x\}$ smooth, and $f \colon Y \to X$ is a strong log resolution with exceptional divisor $E$.

Then \cite{Srinivas-Takagi} shows that 
\begin{enumerate}
    \item $X$ is $F$-nilpotent $\iff$ $H^i(E,\cO_E)$ is Frobenius nilpotent for $0 < i \leq \dim E$,
    \item $X$ is weakly $F$-nilpotent $\iff$ $H^i(E,\cO_E)$ is Frobenius nilpotent for $0 < i < \dim E$, 
\end{enumerate}
In particular, a characteristic zero singularity is of $F$-nilpotent type if $H^i(E_p,\cO_{E_p})$ is Frobenius nilpotent for $0 < i \leq \dim E$ and $p \gg 0$, and an analogus statement holds for weak $F$-nilpotent-type.

To show (1) and (2), note that the natural map
\[
R^if_*\cO_{Y,{\rm perf}} \to R^if_*\cO_{E,{\rm perf}}
\]
is an isomorphism for all $i \in \Z$ by (\ref{eq:exact-triangle-Eperf}).  Then (1) and (2) follow by Lemma \ref{lem:RationalPerfDefofFNilpt} and the exact triangle
\[
\cO_{X, {\rm perf}} \to Rf_*\cO_{Y, {\rm perf}} \to R^{>0}f_*\cO_{Y, {\rm perf}} \xrightarrow{+1}\.
\]
\end{rmk}

Next, we make some preparatory remarks and set up the notation. 
\begin{setting} \label{setting:char0}
Let $X$ be an equidimensional affine variety of dimension $d$ over a field $k$ of characteristic zero. We fix an embedding $u \colon X \hookrightarrow W$ into a smooth affine variety $W$ such that ${\rm codim}_W(X) \geq 2$. Moreover, we fix a finitely generated $\Z$-subalgebra $A \subseteq k$.  

We will adhere to the following convention: whenever we consider a flat model $Y_A$ over $A$ of a $k$-variety $Y$ we will denote the restriction to a fiber over $s \in S$ by a subscript $Y_s$. 
\end{setting}

\begin{lem} \label{lem-Exceptional} 
With notation as in Setting \ref{setting:char0}, consider a log resolution $\pi \colon T \to W$ of $(W,X)$ fitting inside the following diagram
\begin{equation} \label{eq:blow-up-square}
\begin{tikzcd}
G \ar[hook]{r} \ar{d}{\rho} & T \ar{d}{\pi} \\
X \ar[hook]{r}{u} & W,
\end{tikzcd}
\end{equation}
where $G = \pi^{-1}(X)^{\rm red}$. Consider a flat model of this diagram over $A$.

Then assuming that Conjecture \ref{conj:weakordinarity} holds, there exists a Zariski-dense set of closed points $S \subseteq \Spec A$ such that for all $s \in S$ the natural map
\[ H^i_{x}(R(\rho_s)_* \cO_{G_s}) \to H^i_{x}(R(\rho_s)_* \cO_{G_s,\rm perf})\]
is injective for every integer $i \in \Z$ and every point $x \in X_s$.
\end{lem}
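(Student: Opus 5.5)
The strategy is to reduce the statement to the $F$-injectivity-type assertion that Frobenius acts injectively on $H^i_x(R(\rho_s)_*\cO_{G_s})$, and then prove that assertion from Conjecture \ref{conj:weakordinarity} in the way Bhatt--Schwede--Takagi \cite{BST} show that Du Bois singularities are of $F$-injective type. This is natural here: $R\rho_*\cO_G\cong \DDB^0_X$ by (\ref{eq:Schwede}), and its reduction $R(\rho_s)_*\cO_{G_s}$ plays the role of $\cO_{X_s}$ in \cite{BST}.

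\emph{Step 1 (reduction to Frobenius).} Since $R(\rho_s)_*\cO_{G_s,{\rm perf}}=\varinjlim_e R(\rho_s)_*F^e_*\cO_{G_s}$ and local cohomology commutes with filtered colimits, we have $H^i_x(R(\rho_s)_*\cO_{G_s,{\rm perf}})=\varinjlim_e H^i_x(R(\rho_s)_*F^e_*\cO_{G_s})$. So it suffices to find a dense $S$ such that for $s\in S$ the single Frobenius map
\[
F\colon H^i_x(R(\rho_s)_*\cO_{G_s})\to H^i_x(R(\rho_s)_*F_*\cO_{G_s})
\]
is injective for all $i$ and all $x$; injectivity of every iterate $F^e$ then follows, since $F_*$ is exact and compatible with the colimit. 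Because $X_s$ is $F$-finite, local duality (with the dualising complex from \cite{bhatt2026ffiniteschemesdualizingcomplex}) and Grothendieck duality for the proper map $\rho_s$ turn this into the following condition: for every $j$, the trace
\[
\cH^{j}\bigl(R(\rho_s)_*F_*\omega^\bullet_{G_s}\bigr)\to \cH^{j}\bigl(R(\rho_s)_*\omega^\bullet_{G_s}\bigr)
\]
is surjective as a map of coherent sheaves on $X_s$. This reformulation is pointwise-free, so the resulting $S$ will not depend on $x$.

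\emph{Step 2 (globalisation).} Compactify: choose a projective compactification $\overline W\supseteq W$ and extend $\pi$ to a log resolution $\overline T\to\overline W$ such that $\overline G:=\pi^{-1}(\overline X)^{\rm red}$ is an SNC divisor with $\overline G\cap T=G$. Spread all of this out over $A$, shrinking $A$ if needed. Fix an ample line bundle $L$ on $\overline X$. Surjectivity of a map of coherent sheaves can be tested after twisting by $L^{m}$ for $m\gg0$ and taking global sections, using Serre vanishing for the finitely many cohomology sheaves involved. Since $L^{pm}$ pulls back through Frobenius, the local surjectivity in Step 1 follows from the surjectivity of the Frobenius trace
\[
H^{j}(\overline G_s,F_*(\omega^\bullet_{\overline G_s}\otimes\rho^*L^{pm}))\to H^j(\overline G_s,\omega^\bullet_{\overline G_s}\otimes\rho^*L^m),
\]
which is Serre dual to injectivity of Frobenius on $H^{*}(\overline G_s,\rho^*L^{-m})$, for one fixed $m$. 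To get sheaves on $\overline X$ rather than only sections, one also needs uniform-in-$s$ Serre vanishing; this holds on an open subset of $\Spec A$ by generic flatness and semicontinuity.

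\emph{Step 3 (weak ordinarity).} Use the Mayer--Vietoris (semi-simplicial) resolution of $\cO_{\overline G}$ by the structure sheaves of the strata $\overline G_J=\bigcap_{j\in J}\overline G_j$, each of which is smooth projective. Pass to a cyclic cover trivialising a root of $\rho^*L^{m}$, and take a suitable product of all these smooth projective varieties. Applying Conjecture \ref{conj:weakordinarity} to this single smooth projective variety gives one Zariski-dense $S$ on which Frobenius is bijective on the top cohomology of the structure sheaf, and hence, by the argument of \cite{MS11}, on all $H^i(\cO)$ of each stratum, suitably twisted. A spectral sequence argument then transfers bijectivity from the strata to $\overline G_s$.

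I expect this last step to be the main obstacle. Weak ordinarity controls only the top-degree cohomology of the structure sheaf of one smooth projective variety, so the work lies in (a) producing a single auxiliary variety whose ordinarity controls all strata and the twist simultaneously, so that one dense set $S$ suffices, and (b) keeping Frobenius bijective through the Mayer--Vietoris spectral sequence. Here I would follow \cite[Section 5]{BST} closely, in particular their use of bijectivity (rather than mere injectivity), which is stable under extensions and hence passes along the spectral sequence.
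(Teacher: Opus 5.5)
Your outline is sound, but it takes a genuinely different and longer route than the paper. The paper never treats the snc variety $G$ globally. It keeps the smooth ambient $T$ and considers a square whose top row is the trace
\[
R^i(\pi_s)_*F^e_*\omega_{T_s}(G_s)\to R^i(\pi_s)_*\omega_{T_s}(G_s),
\]
whose bottom row is the trace for $\omega_{G_s}$, and whose vertical maps come from the adjunction surjection $\omega_T(G)\to\omega_G$.

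\begin{itemize}
\item The top row is surjective on a dense set by quoting \cite[Theorem 4.4]{BST}. This is the only place weak ordinarity enters, already packaged for log smooth pairs.
\item The vertical maps are surjective because $R^{>0}(\pi_s)_*\omega_{T_s}=0$, by Grauert--Riemenschneider vanishing spread out.
\item Hence the bottom row is surjective, and local duality plus Cohen--Macaulayness of $G_s$ finishes the proof.
\end{itemize}

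So the paper's last step is your Step 1 read backwards. Your Steps 2--3 amount to reproving the relevant case of \cite[Theorem 4.4]{BST} directly on $\overline G$. Your route gains independence from the ambient $T$ and from Grauert--Riemenschneider vanishing for $\pi$. It pays with the compactification, uniform Serre vanishing, and the passage from twisted cohomology on an snc variety to ordinarity of finitely many smooth projective varieties.

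In Step 3, be precise that Frobenius sends $H^*(\overline G_s,\bar\rho^*L^{-m})$ to $H^*(\overline G_s,\bar\rho^*L^{-pm})$. So "bijective on twisted cohomology" only makes sense after you convert to an honest $p$-linear endomorphism. Your cyclic cover does this when $p\nmid N$.

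A simpler device avoids covers altogether. Write $\bar\rho^*L^{-m}\cong\cO_{\overline G}(-B)$ for a general $B\in|\bar\rho^*L^m|$ transverse to all strata. Then
\[
\cO_{\overline G}(-B)\to F_*\cO_{\overline G}(-pB)\subseteq F_*\cO_{\overline G}(-B)
\]
is a $p$-linear endomorphism. It sits in the Frobenius-equivariant sequence $0\to\cO_{\overline G}(-B)\to\cO_{\overline G}\to\cO_B\to 0$. Bijectivity on $H^*(\cO_{\overline G_s})$ and $H^*(\cO_{B_s})$, obtained from the strata via Mayer--Vietoris and \cite{MS11}, then gives bijectivity on $H^*(\cO_{\overline G_s}(-B_s))$. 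That in turn gives the injectivity of $H^*(\bar\rho^*L^{-m})\to H^*(\bar\rho^*L^{-pm})$ that your Step 2 needs.

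Finally, in the product trick, leave out any variety whose relevant top cohomology vanishes. Otherwise the tensor product is zero and bijectivity on it says nothing about the other factors.
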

\begin{proof}
First, by replacing $\Spec A$ by a distinguished open subset, we may assume that restriction to every fiber over $\Spec A$ preserves regularity and simple normal crossing property in Diagram (\ref{eq:blow-up-square}). Moreover, by the Grauert-Riemenschneider vanishing, we may assume that
\begin{equation} \label{eq:GRRed}
R^i(\pi_s)_*\omega_{T_s} = 0
\end{equation}
 for every $i > 0$ and $s \in \Spec A$ (cf.\ \cite[Lemma 2.3]{kawakami2026higherfrationalsingularities}). Fix integers $i, e \geq 0$ and consider the following diagram
\[
\begin{tikzcd}
R^i(\pi_{s})_*F^e_*\omega_{T_s}(G_s) \ar{r}{T^e} \ar{d} &  R^i(\pi_{s})_*\omega_{T_s}(G_s) \ar{d} \\  
R^i(\rho_{s})_*F^e_*\omega_{G_s} \ar{r}{T^e} &  R^i(\rho_{s})_*\omega_{G_s}
\end{tikzcd}
\]
in which vertical maps are induced by adjunction and the horizontal arrows are induced by the Frobenius trace map.

By \cite[Theorem 4.4]{BST} we can find a Zariski-dense set of closed points $S \subseteq \Spec A$ such that the top horizontal arrow is surjective for every $s \in S$. Moreover, by (\ref{eq:GRRed}), the vertical arrows are surjective for every $s \in S$. Hence the bottom horizontal arrow:
\[
T^e \colon R^i(\rho_{s})_*F^e_*\omega_{G_s} \to R^i(\rho_{s})_*\omega_{G_s}
\]
is surjective for every $s \in S$ (see also \cite[(4.6.1)]{BST}). Now the statement of the lemma follows by local duality and the Cohen--Macaulayness of the simple normal crossing variety $G_s$.
\end{proof}

In the next few paragraphs, we carefully construct a log resolution $\pi \colon T \to W$ of $(W,X)$ tailored to the proof of the main theorem. First, let $f \colon Y \to X$ be a resolution of singularities of $X$ obtained as a sequence of blow-ups $f \colon Y = Y_n \to Y_{n-1} \to \ldots \to Y_0 = X$ along smooth centers. By blowing-up the same centers on the ambient space $W$ we obtain a projective birational morphism $\phi \colon Z \to W$ fitting inside the following diagram:
\[ \begin{tikzcd} Y \ar[r, hook] \ar[dr, swap, "f"] & E\ar[r, hook] \ar[d, "\tau"] & Z\ar[d, "\phi"]\\ & X \ar[r, "u"] & W\end{tikzcd}\]
where $E := \phi^{-1}(X)^{\rm red}$ is the total inverse image of $X$ which contains $Y$ as a union of irreducible components (cf.\ \cite[Corollary II.7.15]{HartshorneBook}). 

Note that $Z$ is smooth and that $E$ restricted to the open locus $Z \setminus Y \subseteq Z$ is a simple normal crossing divisor. Thus, we can construct a log resolution $\psi \colon T \to Z$ of $(Z,E)$ which is an isomorphism over $Z \setminus Y$. This map fits inside the following diagram
\begin{equation} \label{eq:bigBlowupDiagram} \begin{tikzcd}[column sep = large]  H \ar[r, "v"]  \ar[ddr, bend right = 73, swap, dashed, "h"] \ar[d, "g"] & G \ar[dd, bend left = 30, dashed, "\rho"{pos=0.3}] \ar[r] \ar[d, swap, "\gamma"] & T\ar[d, "\psi"] \ar[dd, bend left = 45, dashed, "\pi"] \\ Y \ar[r] \ar[dr,swap, "f"] & E\ar[r] \ar[d, swap, "\tau"] & Z\ar[d, "\phi"]\\ & X \ar[r, "u"] & W\end{tikzcd}
\end{equation}
where $H := \psi^{-1}(Y)^{\rm red}$, $G := \psi^{-1}(E)^{\rm red}$, and the dashed arrows denote natural compositions.

This induces the following key diagram
\begin{equation} \label{eq:daggerDiagram}
\begin{tikzcd}[column sep = large]
Rh_*\cO_H & \ar{l}{v^*} R\rho_*\cO_G \\
Rf_* \cO_Y \ar{u}{g^*}[swap]{\cong} & \ar{l}{f^*} \DDB^0_X.  \ar{u}{\rho^*}[swap]{\cong}
\end{tikzcd}
\end{equation}
In this diagram:
\begin{enumerate}
    \item the right vertical arrow is an isomorphism by (\ref{eq:Schwede}) as $\pi \colon T \to W$ is a strong log resolution of $(W,X)$ with exceptional locus $G$;
    \item the left vertical arrow is an isomorphism as $g^* \colon \cO_Y \to Rg_*\cO_H$ is an isomorphism by (\ref{eq:Schwede}) given that $Y$ is smooth and $\psi \colon T \to Z$ is a strong log resolution of $(Z,Y)$ with exceptional locus $H$;
    \item the map $f^* \colon \DDB^0_X \to Rf_*\cO_Y$ agrees with the map (\ref{eq:keymap}) from the main theorem.  
\end{enumerate}

The upshot is that
\begin{align} \label{eq:equivOfDagger1}
f^* \colon \DDB^0_X \to Rf_*\cO_Y \text{ is an isomorphism} \! &\iff \! v^* \colon R\rho_*\cO_G \to Rh_*\cO_H \text{ is an isomorphism} \\ \label{eq:equivOfDagger2}
\DDB^0_X \text{ is Cohen--Macaulay} \! &\iff \! R\rho_*\cO_G \text{ is Cohen--Macaulay}.
\end{align}

\begin{proof}[Proof of Theorem \ref{thm-main}]
Consider a flat model of Diagram (\ref{eq:bigBlowupDiagram}) over a finitely generated $\Z$-subalgebra $A \subseteq k$. By replacing $\Spec A$ by a distinguished open subset, we may assume that for every $s \in \Spec A$, the restriction $(-)_s$ preserves regularity and simple normal crossing property in (\ref{eq:bigBlowupDiagram}). Moreover, by Theorem \ref{thm:taureduction} and Grauert-Riemenschneider vanishing, we may assume that (cf.\ \cite[Lemma 2.3]{kawakami2026higherfrationalsingularities})
\begin{align} \label{eq:tauJEqualInProof}
&\tau(\omega_{X_s}) = \cJ(\omega_{X_s}), \text{ and } \\
&R^{>0}(f_s)_*\omega_{Y_s} = 0 \nonumber
\end{align}
for every $s \in \Spec A$.

Next, by (\ref{eq:SchwedeCharP}) and the same argument as in the paragraph below (\ref{eq:daggerDiagram}), we get the following commutative diagram:
\begin{equation} \label{eq:daggerDiagramPerf}
\begin{tikzcd}[column sep = large]
R(h_s)_*\cO_{H_s, {\rm perf}} & \ar{l}[swap]{v_s^*} R(\rho_s)_*\cO_{G_s, {\rm perf}} \\
R(f_s)_* \cO_{Y_s, {\rm perf}} \ar{u}{g_s^*}[swap]{\cong} & \ar{l}{f_s^*} \cO_{X_s, {\rm perf}}.  \ar{u}{\rho_s^*}[swap]{\cong}
\end{tikzcd}
\end{equation}

For the reader’s convenience, we place the complementary diagrams (\ref{eq:daggerDiagram}) and (\ref{eq:daggerDiagramPerf}) side by side to facilitate tracing the main argument:
\begin{equation} \label{eq:daggerDiagramFull}
\begin{tikzcd}
\ar{r}{v^*} R\rho_*\cO_G  & Rh_*\cO_H  \\
\ar{r}[swap]{f^*} \DDB^0_X  \ar{u}{\rho^*}[swap]{\cong} &  Rf_* \cO_Y \ar{u}{g^*}[swap]{\cong} 
\end{tikzcd}
\hspace{1em}
\begin{tikzcd}
R(h_s)_*\cO_{H_s} \ar{r} & R(h_s)_*\cO_{H_s, {\rm perf}} & \ar{l}[swap]{v_s^*} R(\rho_s)_*\cO_{G_s, {\rm perf}} \\
R(f_s)_* \cO_{Y_s} \ar{u}{g_s^*}[swap]{\cong} \ar{r} & R(f_s)_* \cO_{Y_s, {\rm perf}} \ar{u}{g_s^*}[swap]{\cong} & \ar{l}{f_s^*} \cO_{X_s, {\rm perf}}.  \ar{u}{\rho_s^*}[swap]{\cong}
\end{tikzcd}
\end{equation}
Here, the leftmost vertical arrow in the right diagram may be assumed to be an isomorphism by replacing $\Spec A$ by a distinguished open subset (cf.\ \cite[Lemma 2.3]{kawakami2026higherfrationalsingularities}).

We are now ready to prove the theorem. We split the statement into four implications. In what follows we say that a map $g \colon \cF \to \cG$ of elements $\cF, \cG \in D^b_{\rm coh}(X)$ on a Noetherian scheme $V$ is \emph{injective on local cohomology} or \emph{perverse-injective} if the induced map $H^i_x(\cF) \to H^i_x(\cG)$ is injective for all closed points $x \in V$ and all integers $i \in \Z$. \\

\noindent \textbf{Part 1.} $\begin{aligned}[t] f^* \colon \DDB^0_X \xrightarrow{\cong} Rf_*\cO_Y  &\implies \text{$X$ is of open $F$-nilpotent type.} \\ f^* \colon \DDB^0_X \xrightarrow{\cong} Rf_*\cO_Y &\impliedby  \text{$X$ is of open $F$-nilpotent type and Conjecture  \ref{conj:weakordinarity} holds.} \end{aligned}$\\[0.3em]

Up to replacing $\Spec A$ by a distinguished open subset, for a fixed $s \in \Spec A$ we have:
\begin{align*}
f^* \colon \DDB^0_X \xrightarrow{\cong} Rf_*\cO_Y \overset{(\ref{eq:injectivity3})}{\iff}&\  \DDB^0_X \xrightarrow{f^*} Rf_*\cO_Y \text{ is perverse-injective} \\
\overset{(\ref{eq:daggerDiagramFull})}{\iff}&\  R\rho_*\cO_G \xrightarrow{v^*} Rh_*\cO_H \text{ is  perverse-injective } \\
\overset{(\star)}{\iff}&\   R(\rho_s)_* \cO_{G_s} \xrightarrow{v_s^*} R(h_s)_* \cO_{H_s} \text{ is perverse-injective } \\
\overset{(\dagger)}{\implies}&\  R(\rho_s)_* \cO_{G_s, {\rm perf}} \xrightarrow{v_s^*} R(h_s)_* \cO_{H_s, {\rm perf}} \text{ is perverse-injective } \\
\overset{(\ref{eq:daggerDiagramFull})}{\iff}&\  \cO_{X_s, {\rm perf}} \xrightarrow{f_s^*} R(f_s)_* \cO_{Y_s, {\rm perf}} \text{ is perverse-injective} \\[0.3em]
\overset{}{\iff}&\  \text{$X_s$ is $F$-nilpotent},
\end{align*}
where ($\star$) follows by \cite[Proposition 2.11]{kawakami2026higherfrationalsingularities} (the statement assumes normality, but this assumption is unnecessary in its proof) and the last equivalence follows by (\ref{eq:tauJEqualInProof}) and Lemma \ref{lem:RationalPerfDefofFNilpt}. Though not needed for the proof, let us point out for clarity that in all the statements from left to right, we could have replaced \emph{perverse injectivity} by \emph{isomorphism} (cf.\ (\ref{eq:injectivity2})).

This concludes the proof of $(\implies)$. In order to prove $(\impliedby)$, we can invoke Conjecture  \ref{conj:weakordinarity} and Lemma \ref{lem-Exceptional} to find a Zariski-dense set of closed points $S \subseteq \Spec A$ such that
\begin{equation} \label{eq:FromBSTInProof} H^i_{x}(R(\rho_s)_* \cO_{G_s}) \to H^i_{x}(R(\rho_s)_* \cO_{G_s,\rm perf})
\end{equation}
is injective for all $s \in S$, for every integer $i \in \Z$, and for every point $x \in X_s$. In particular, the converse of implication ($\dagger$) holds true, which concludes the proof of $(\impliedby$).

We emphasize that in the proof of $(\impliedby$) it is not enough to assume that $X$ is of dense $F$-nilpotent type, namely that $X_s$ is $F$-nilpotent for $s$ within a dense set of closed points $Z \subseteq \Spec A$. Indeed, for the proof of $(\impliedby)$ to work, we need to pick $s \in S \cap Z$ but this intersection could be empty.

\noindent \textbf{Part 2.} $\begin{aligned}[t] \text{$\DDB^0_X$  is Cohen--Macaulay}  &\implies \text{$X$ is of open weakly $F$-nilpotent type.} \\ \text{$\DDB^0_X$  is Cohen--Macaulay} &\impliedby  \text{$X$ is of open weakly $F$-nilpotent type and Conj.\  \ref{conj:weakordinarity} holds.} \end{aligned}$\\[0.3em]

Up to replacing $\Spec A$ by a distinguished open subset, for $s \in \Spec A$ we have the following:\\[-1em] 
\begin{alignat*}{3}
\text{$\DDB^0_X$ is Cohen--Macaulay} &\iff \text{$R\rho_*\cO_{G}$ is Cohen--Macaulay}  \ &&\text{ by Diagram (\ref{eq:daggerDiagramFull})}  \\[0.3em]
&\iff \text{$R(\rho_s)_*\cO_{G_s}$ is Cohen--Macaulay}  \ &&\text{ by \cite[Proposition 2.11]{kawakami2026higherfrationalsingularities}} \\[-0.2em]
&\,\overset{{\footnotesize (\dagger\dagger)}}{\implies} \text{$R(\rho_s)_*\cO_{G_s, {\rm perf}}$ is Cohen--Macaulay}  \ && \\[0.35em]
&\iff \text{$\cO_{X_s, {\rm perf}}$ is Cohen--Macaulay} \ &&\text{ by Diagram (\ref{eq:daggerDiagramFull})} \\[0.3em]
&\iff \text{$X_s$ is weakly $F$-nilpotent.} \ &&
\end{alignat*}

This concludes the proof of ($\implies$). As for the converse, we can invoke Conjecture  \ref{conj:weakordinarity} and Lemma \ref{lem-Exceptional} to find a Zariski-dense set of closed points $S \subseteq \Spec A$ such that (\ref{eq:FromBSTInProof}) 
is injective for all $s \in S$, for every integer $i \in \Z$, and for every point $x \in X_s$. In particular, the converse of implication ($\dagger\dagger$) holds true, which concludes the proof of $(\impliedby)$.
\end{proof}

We conclude with a collection of examples. First, we describe a useful characterization in the characteristic zero setting.

\begin{rmk} Let $Y$ be a smooth complex algebraic variety and let $f\in \cO_Y(Y)$ be a globally defined regular function. The \emph{Bernstein-Sato polynomial} of $f$ is, by definition, the monic polynomial $b_f(s)$ of least degree such that there exists $P(s) \in \mathcal D_Y[s]$ (here $s$ is a new variable) with equality
\[ b_f(s) f^s = P(s) f^{s+1}.\]
The symbol $f^s$ is formal, where the action of differential operators $P \in \cD_Y$ is induced multiplicatively by the usual power rule:
\[ \theta f^s = \theta(f) s f^{s-1}, \quad \text{ for } \theta \in \cT_Y.\]

This construction is fundamental in the study of complex hypersurface singularities. The roots of this polynomial are always negative rational numbers, and we always have $(s+1) \mid b_f(s)$. Importantly, the variety $X = V(f) \subseteq Y$ is a rational homology manifold if and only if $b_f(s)/(s+1)$ has no integer roots, see \cite{Torrelli}*{Theorem 1.2}.

In the isolated, weighted homogeneous singularities case, we can say more: we even have equality \cite{DOR2}*{Theorem A}
\begin{equation} \label{eq-IsolatedWeightHomog} {\rm HRH}(X) + 2 = \inf \left\{k \mid (s+k) \text{ divides } b_f(s)/(s+1)\right\},\end{equation}
though this equality fails in general. This equality follows because, in the isolated, weighted homogeneous singularity case, the negated Bernstein-Sato roots agree with the spectral numbers (see \cite{JKSYSpectrum}*{Remark 1.5(ii)}).

The polynomial $b_f(s)$ also measures the Du Bois and rational singularities of $X$. If we define the \emph{minimal exponent} $\widetilde{\alpha}(f) = \min\{\lambda \mid (s+\lambda) \mid b_f(s)/(s+1)\}$, then we have
\begin{align*} \widetilde{\alpha}(f) \geq 1 &\iff X \text{ has Du Bois singularities, by \cite{SaitoMHC}, and }\\
\widetilde{\alpha}(f) > 1 &\iff X \text{ has rational singularities, by \cite{SaitoRat}}.
\end{align*}
\end{rmk}

\begin{egs} \label{eg-Examples} We use the examples of \cite{Srinivas-Takagi}*{Example 2.7}. Below let $X = {\rm Spec}(R)$, $X_\C = X\times_\Z \C$ and $X_{\mathbf F_p} = X\times_{\Z} \mathbf F_p$.

\begin{enumerate} \item Let $R = \Z[x,y,z]/(x^2+y^3+z^7)$. Then $X_{\C}$ is a rational homology manifold (for example, the reduced Bernstein-Sato polynomial of $f = x^2+y^3+z^7$ is easily checked to have no integer roots by the Thom-Sebastiani rule \cite{SaitoMicrolocal}*{Theorem 0.8}: its (negated) roots are
\[ \left\{ \frac{41}{42}, \frac{47}{42}, \frac{53}{42}, \frac{55}{42}, \frac{59}{42}, \frac{61}{42}, \frac{65}{42},\frac{67}{42},\frac{71}{42},\frac{73}{42},\frac{79}{42},\frac{85}{42} \right\}.\] 

Thus, ${\rm HRH}(X_\C) = +\infty \geq 0$. In this example, $X_{\mathbf F_p}$ is known to be $F$-nilpotent \cite{Srinivas-Takagi}*{Example 2.7(1)}.

\item Similarly, for $R = \Z[x,y,z]/(x^2+y^3+z^7+xyz)$, we see that if we set $f = x^2+y^3+z^7+xyz$, then the minimal exponent $\widetilde{\alpha}(f)$ is equal to $1$. To see this, Macaulay2 shows that its Bernstein-Sato polynomial is
\[
b_f(s)
= (s+1)^3
  \left(s+\frac{3}{2}\right)
  \left(s+\frac{4}{3}\right)
  \left(s+\frac{5}{3}\right)
  \prod_{j=8}^{13}\left(s+\frac{j}{7}\right).
\]
In particular, $X_\C$ has Du Bois but not rational singularities, so this implies ${\rm HRH}(X_\C) = -1$. Also, $X_{\mathbf F_p}$ is known to be $F$-injective but not $F$-nilpotent \cite{Srinivas-Takagi}*{Example 2.7(2)}, which matches with the characteristic $0$ behavior.

\item For $R = \Z[x,y,z]/(x^4+y^4+z^4)$ we see by \cite{Srinivas-Takagi}*{Example 2.7(3)} that $X_{\mathbf F_p}$ is $F$-nilpotent for $p \not \equiv 1 \pmod 4$. On the other hand, the integer factors of the reduced Bernstein-Sato polynomial of $X_{\mathbf C}$ are $(s+1)$ and $(s+2)$. The first factor, combined with the homogeneity and \eqref{eq-IsolatedWeightHomog}, implies that ${\rm HRH}(X_{\mathbf C}) = -1$. This can also be checked by blowing up the origin, and verifying the non-vanishing of cohomology of the exceptional divisor. 

\item For a non-hypersurface singularity, consider $R = \Z[s^4,s^3t,st^3,t^4]$, which is not even Cohen--Macaulay. Then $X = {\rm Spec}(R_\C)$ satisfies ${\rm HRH}(X_\C) \geq 0$. Indeed, following \cite{MPLocCoh}*{Example 12.7} if we let $\widetilde{R} = \Z[s^4,s^3t,s^2t^2,st^3,t^4]$ be the normalization of $R$ with $\widetilde{X} = {\rm Spec}(\widetilde{R})$, then the normalization map 
\[ \nu \colon \widetilde{X} \to X\]
is a homeomorphism (in the classical topology). On the other hand, $\widetilde{X}$ is the quotient of $\A^2$ by the action of $\mu_4$, hence is a rational homology manifold by \cite{Brion}*{Prop. A.1(3)}. This proves that $X_{\C}$ is also a rational homology manifold, hence ${\rm HRH}(X_{\C}) = +\infty \geq 0$.

Finally, $X_{\mathbf F_p}$ is known to be $F$-nilpotent. To see this, we have the short exact sequence
\[ 0 \to R_{\mathbf F_p} \to \widetilde{R}_{\mathbf F_p} \to \mathbf F_p \cdot (\overline{s^2t^2}) \to 0,\]
which, in view of $\widetilde{R}_{\mathbf F_p}$ being normal, gives isomorphisms 
\begin{align*}H^1_{\frm}(R_{\mathbf F_p}) &\cong \mathbf F_p,\\
H^2_{\frm}(R_{\mathbf F_p}) &\cong H^2_{\frm}(\widetilde{R}_{\mathbf F_p}).
\end{align*}

Note that Frobenius acts by zero on $\mathbf F_p \cdot (\overline{s^2t^2})$: indeed, for any prime $p$, we have $s^{2p} t^{2p} \in R_{\mathbf F_p}$ (obvious for $p=2$ and for $p=2k+1$ odd, we have $s^{4k+2}t^{4k+2} = (st^3)^2(s^4)^k (t^4)^{k-1}$). Thus, $R_{\mathbf F_p}$ is weakly $F$-nilpotent.

Finally, $\widetilde{R}_{\mathbf F_p}$, being a direct summand of the polynomial ring $\mathbf F_p[s,t]$, is strongly $F$-regular \cite{HochsterHunekeFRegular}*{Theorem 5.5(e)}, hence $F$-rational \cite{HochsterHunekeFRegular}*{Theorem 4.2(a)}. 

Now, we claim $0^*_{H^2_{\frm}(R)} = 0$. For this, let $\eta \in 0^*_{H^2_{\frm}(R_{\mathbf F_p})}$. By definition, this means there exists $c\in R\setminus \{0\}$ and $e_0 \in \Z_{\geq 1}$ with $c F^e(\eta) = 0$ for all $e \geq e_0$. If we view this equality in the isomorphic space $H^2_{\frm}(\widetilde{R}_{\mathbf F_p})$, we conclude by Frobenius compatibility and the fact that $R_{\mathbf F_p} \subseteq \widetilde{R}_{\mathbf F_p}$ is an inclusion of domains, that $\eta \in 0^*_{H^2_{\frm}(\widetilde{R}_{\mathbf F_p})} = \{0\}$, by $F$-rationality. Thus, $\eta = 0$ in $H^2_{\frm}(R_{\mathbf F_p})$, too, proving the claim.

\item For an example with possibly non-isolated singularities, let $Y$ be a complex projective variety with rational singularities and let $L$ be an ample line bundle on $Y$. Consider the cone $X = {\rm Spec}(\bigoplus_{m\geq 0} H^0(Y,L^{\otimes m}))$ over $Y$ with conormal bundle $L$; in other words, the contraction of the zero section in the total space of $L^{-1}$. 

Then \cite{CDOIsolated}*{Corollary 7.3} says that ${\rm HRH}(X) \geq 0$ if and only if \[
H^1(\cO_Y) = \ldots = H^{\dim Y}(\cO_Y) = 0.
\]

On the other hand, let $Y$ be a projective variety over a perfect field $k$ of characteristic $p$ which has $F$-rational singularities and with an ample line bundle $L$. By \cite{MaddoxMiller}*{Lemma 5.16}, the cone $X = {\rm Spec}(\bigoplus_{m\geq 0} H^0(Y,L^{\otimes m}))$ has $F$-nilpotent singularities at the origin if and only if the action of Frobenius on $H^q(Y,\cO_Y)$ is nilpotent for all $q\geq 1$.
\end{enumerate}
\end{egs}

\bibliography{bib}

\end{document}